\numberwithin{equation}{section}
\definecolor{DPurple}{rgb}{0.46,0.2,0.69}
\theoremstyle{definition}
\newtheorem{definition}{Definition}[section]
\theoremstyle{remark}
\newtheorem{remark}[definition]{Remark}
\theoremstyle{plain}
\newtheorem{theorem}[definition]{Theorem}
\newtheorem{result}[definition]{Result}
\newtheorem{lemma}[definition]{Lemma}
\newtheorem{proposition}[definition]{Proposition}
\newtheorem{example}[definition]{Example}
\newcommand{\eps}{\varepsilon}
\newcommand{\zt}{\zeta}
\newcommand{\zbar}{\overline{z}}
\newcommand{\cHess}{\mathfrak{H}_{\raisebox{-2pt}{$\scriptstyle {\mathbb{C}}$}}}
\newcommand\Levi[1]{\mathscr{L}_{{#1}}}
\newcommand{\dbar}{\overline\partial}
\newcommand{\bdy}{\partial}
\newcommand{\D}{\mathbb{D}}
\newcommand\rB[1]{\mathbb{B}^{{#1}}}
\newcommand{\smoo}{\mathcal{C}}
\newcommand{\bcdot}{\boldsymbol{\cdot}}
\newcommand{\lrarw}{\longrightarrow}
\newcommand{\btl}{\blacktriangleleft}
\newcommand{\dst}{{\rm dist}}
\newcommand{\V}{\mathcal{V}}
\newcommand{\Tdist}{\delta_{\Omega}}
\newcommand\vc[1]{\boldsymbol{{\sf {#1}}}}
\newcommand{\Z}{\mathbb{Z}}
\newcommand{\N}{\mathbb{N}}
\newcommand{\Cn}{\mathbb{C}^n}
\newcommand{\C}{\mathbb{C}} 
\newcommand{\R}{\mathbb{R}}
\newcommand\cproj[1]{\mathbb{C}\mathbb{P}^{{#1}}}
\newcommand{\re}{{\sf Re}}
\newcommand{\im}{{\sf Im}}
\newcommand{\wt}{\widetilde}
\begin{document}

\title[Unbounded visibility domains]{Unbounded visibility domains: metric
estimates \\ and an application}

\author{Annapurna Banik}
\address{Department of Mathematics, Indian Institute of Science, Bangalore 560012,
India}
\email{annapurnab@iisc.ac.in}

\author{Gautam Bharali}
\address{Department of Mathematics, Indian Institute of Science, Bangalore 560012,
India}
\email{bharali@iisc.ac.in}

\begin{abstract}
We give an explicit lower bound, in terms of the distance from the boundary,
for the Kobayashi metric of a certain class of bounded pseudoconvex domains
in $\Cn$ with $\smoo^2$-smooth boundary using the regularity theory for the
complex Monge--Amp{\`e}re equation. Using such an estimate, among other tools,
we construct a family of unbounded Kobayashi hyperbolic domains in $\Cn$ having a certain
negative-curvature-type property with respect to the Kobayashi distance. As an
application, we prove a Picard-type extension theorem for the latter domains.
\end{abstract}

\keywords{Complex Monge--Amp{\`e}re equation, Kobayashi metric, Picard-type extension
theorems, visibility} 
\subjclass[2020]{Primary: 32F45, 32H25, 32U05 ; Secondary: 32C25, 32F18}

\maketitle

\vspace{-4mm}
\section{Introduction and statement of results}\label{sec:intro}
A substantial part of the effort and many of the tools discussed in this paper
are directed at the following problems that are seemingly unrelated:
\begin{enumerate}[leftmargin=25pt, label=$(\alph*)$]
  \item Using the regularity theory for the complex Monge--Amp{\`e}re equation
  on bounded domains $\Omega\Subset \Cn$, $n\geq 2$, to estimate the Kobayashi
  pseudometric $k_{\Omega}(z; \bcdot)$ in terms of ${\rm dist}(z, \bdy\Omega)$.

  \item A Picard-type extension theorem for holomorphic mappings into domains
  $\Omega\varsubsetneq \Cn$, $n\geq 2$, where $\Omega$ is unbounded, but is not
  the complement of a divisor.
\end{enumerate}
The theme that links these problems is a weak notion of negative curvature
for the metric space $(\Omega, K_{\Omega})$, where $K_{\Omega}$ denotes the
Kobayashi pseudodistance (assumed to be a distance on domains considered
in this paper). This negative-curvature-type property, called \emph{visibility},
is that, loosely speaking, geodesic lines for $K_{\Omega}$ joining two distinct
points in $\bdy{\Omega}$ must bend into $\Omega$ with some mild geometric
control (reminiscent of the Poincar{\'e} disc model of the hyperbolic plane).
\smallskip

If the metric space $(\Omega, K_{\Omega})$ is Cauchy-complete, then any two
points in $\Omega$ are joined by a geodesic (i.e., a path $\sigma : I \to
\Omega$, where $I$ is an interval, that satisfies $K_{\Omega}(\sigma(t),
\sigma(s)) = |t-s|$ for all $s,t \in I$). But when
$n \geq 2$, it is a very hard problem to tell whether, given a domain
$\Omega\varsubsetneq \Cn$, $(\Omega, K_{\Omega})$ is Cauchy-complete (even when
$\Omega$ is pseudoconvex). Therefore, for the domains considered in this paper,
$(\Omega, K_{\Omega})$ will \textbf{not} be assumed to be Cauchy-complete. Thus,
a formal definition of visibility (which will be provided in
Section~\ref{ssec:visi_Picard}) needs to be more refined than the picture described
above. This raises the question: when does a domain have the visibility
property? We begin with this discussion.

\subsection{Visibility and a Picard-type extension theorem}
\label{ssec:visi_Picard}
One of the objectives of this work is to present a new application of
visibility. This will require formalising the rough idea of visibility
mentioned above. We shall say that a domain $\Omega$ is \emph{Kobayashi hyperbolic} if
$K_{\Omega}$ is a distance.

\begin{definition}\label{D:visible} \label{defn:visi}
Let $\Omega\subset \Cn$ be a (not necessarily bounded) Kobayashi hyperbolic
domain.
\begin{enumerate}[leftmargin=25pt]
  \item Let $p$ and $q$ be two distinct points in $\bdy\Omega$. We say
  that the pair $(p,q)$ has the \emph{visibility property with respect to
  $K_\Omega$} if there exist neighbourhoods $U_p$ of $p$ and $U_q$ of $q$ in $\Cn$
  such that $\overline{U}_p\cap\overline{U}_q=\emptyset$ and such that for each
  $\lambda\geq 1$ and each $\kappa\geq 0$, there exists a compact set
  $K\subset \Omega$ such that the image of each
  $(\lambda,\kappa)$-almost-geodesic $\sigma:[0,T]\lrarw\Omega$ with
  $\sigma(0)\in U_p$ and $\sigma(T)\in U_q$ intersects $K$.
 \item We say that \emph{$\bdy\Omega$ is visible} if every pair of
 distinct points $p,q\in\bdy\Omega$ has the visibility property with respect
 to $K_\Omega$.
\end{enumerate}
\end{definition}

The property of $\bdy\Omega$ being visible is closely related to the notion of
$\Omega$ being a \emph{visibility domain}, which was introduced by
Bharali--Zimmer \cite{bharali-zimmer:2017, bharali-zimmer:2023}. The
two notions are equivalent. This, in brief, is due to the fact that $\overline{\Omega}$
(resp., the Freudenthal end-compactification of $\overline{\Omega}$) is sequentially
compact under the assumptions made in \cite{bharali-zimmer:2017} (resp., in
\cite{bharali-zimmer:2023}). For an alternative argument, see
\cite[Section~1.3]{masanta:vdekdcm24}. We will not define visibility domains
here (as they are not germane to the discussion). Instead, we will work with
the properties introduced in Definition~\ref{defn:visi}, which are adequate for the
application presented here. The notion of visibility itself is not
new: the property introduced in Definition~\ref{defn:visi} is reminiscent of
a property introduced by Eberlein--O'Neill in \cite{eberlein-oneill:1973} in the
context of Riemannian manifolds having non-positive sectional curvature\,---\,formulated
in terms of an abstract boundary in place of $\bdy\Omega$ and geodesics in place of 
$(\lambda,\kappa)$-almost-geodesics. This property is also seen in proper geodesic
metric spaces that are Gromov hyperbolic; 
in this setting, the Gromov boundary takes the place of $\bdy\Omega$.
The latter form of visibility, for domains $\Omega\varsubsetneq \Cn$
such that $(\Omega, K_{\Omega})$ is a proper (hence geodesic, by the properties of
$K_{\Omega}$) metric space, underlies the proofs of several results that are precursors
to the results on holomorphic mappings alluded to in the next paragraph: see,
for instance, \cite{balogh-bonk:2000, bracci-gaussier:2020} and
\cite[Part~II]{karlsson:2005}. Results of the latter description are also given
by \cite{zimmer:2017}, which are more directly linked to
the ideas in \cite{eberlein-oneill:1973}. Also see \cite{khanh-thu:2016} for
a result on iterative dynamics whose proof relies on a property that could
be deduced from the visibility of $\bdy\Omega$ (but instead relies on
\cite{karlsson:2005}). In the results just cited,
$(\Omega, K_{\Omega})$ is assumed to be a geodesic space. But recall the discussion on
the difficulty in knowing when $(\Omega, K_{\Omega})$ admits geodesics. This
explains the role of $(\lambda, \kappa)$-almost-geodesics (see Section~\ref{sec:hyp_imb}
for a definition) in Definition~\ref{defn:visi}. They serve as substitutes for
geodesics: this is because if $\Omega$ is Kobayashi hyperbolic, then (regardless
of whether $(\Omega, K_{\Omega})$ is Cauchy-complete) for any
$z,w \in \Omega$, $z \neq w$, and
any $\kappa>0$, there exists a $(\lambda, \kappa)$-almost-geodesic joining $z$ and
$w$ \cite[Proposition~5.3]{bharali-zimmer:2023}. 
\smallskip

Visibility of $\bdy\Omega$ has been used to deduce properties of holomorphic
mappings into $\Omega$\,---\,ranging from their continuous extendability,
to the iterative dynamics of such self-maps\,---\,which are
too numerous to mention here. Instead, we refer readers to
\cite{bharali-zimmer:2017,
bracci-nikolov-thomas:2022, chandel-maitra-sarkar:2021, bharali-zimmer:2023}.
Given this, it is desirable to identify families of unbounded domains
$\Omega\varsubsetneq \Cn$ such that $\bdy\Omega$ is visible. A rich
collection of \textbf{planar} domains with the latter property that also
satisfy other metrical conditions, and domains in $\Cn$, $n\geq 2$, with the
latter property and having rather wild boundaries, have been constructed
in \cite{bharali-zimmer:2023}. But, \emph{given an unbounded domain
$\Omega\varsubsetneq \Cn$, $n\geq 2$, such that $\bdy\Omega$ is
$\smoo^2$-smooth, Levi pseudoconvex, but \textbf{not} strongly Levi
pseudoconvex, are there conditions under which $\bdy\Omega$ is visible?} One
of our theorems addresses this natural question. Why the interest in
\textbf{unbounded} domains, one may ask. The answer will be evident when we
discuss Picard-type theorems.
\smallskip

We present a condition for the visibility of $\bdy \Omega$ that answers the
question in italics stated above. Our condition, roughly, is that the set of
points at which $\bdy \Omega$ is weakly Levi pseudoconvex, if non-empty, is
small but \textbf{not} necessarily totally disconnected.
Some notation: if $A$
and $B$ are non-negative quantities, $A \gtrsim B$ will mean that there
exists a constant $c>0$, independent of all variables determining $A$ and
$B$, such that $A \geq c B$. The vector bundle 
$H(\bdy\Omega) := T(\bdy\Omega)\cap i \,T(\bdy\Omega)$; so, 
$H_{\xi}(\bdy\Omega)$ is the maximal complex subspace of $T_{\xi}(\bdy \Omega)$:
the tangent space of $\bdy\Omega$ at $\xi$. We now define the \emph{Levi form}
of $\bdy\Omega$, denoted by $\Levi{\Omega}$. While, abstractly, the Levi form is
a vector-valued quadratic form $\Levi{\Omega}(\xi, \bcdot) : H_{\xi}(\bdy\Omega)
\to T_{\xi}(\bdy\Omega)\otimes\C/H_{\xi}(\bdy\Omega)\otimes\C$ for $\xi\in
\bdy\Omega$\,---\,see, for
instance, \cite[Chapter~10]{boggess:1991}\,---\,since $\bdy\Omega$ is a
CR~hypersurface embedded in $\Cn$, we can define $\Levi{\Omega}(\xi, \bcdot)$ to
be $\R$-valued. This definition makes use of the standard (flat) Hermitian
metric on $T(\Cn)\otimes\C$, restricted to $T(\bdy\Omega)\otimes\C$, to
identify $T_{\xi}(\bdy\Omega)\otimes\C/H_{\xi}(\bdy\Omega)\otimes\C$ with\linebreak
$(T_{\xi}(\bdy\Omega)\otimes\C)\ominus (H_{\xi}(\bdy\Omega)\otimes\C)$, the
orthogonal complement being given by the above-mentioned metric. Let
$\eta_{\xi}$ be the outward unit normal vector to $\bdy\Omega$ at 
$\xi$ and let $\mathbb{J}_z$\,$(=\mathbb{J}$ for each
$z\in \Cn$) denote the standard almost complex structure on
$T_z(\Cn)\otimes\C$ for each $z\in \Cn$. Since $\mathbb{J}(\eta_{\xi})$
spans $(T_{\xi}(\bdy\Omega)\otimes\C)\ominus (H_{\xi}(\bdy\Omega)\otimes\C)$, the last
observation enables us to define the Levi form as   
\[
  \Levi{\Omega}(\xi; v) :=
  (1/2i)\,\big\langle\,[\,\overline{\vc{v}}, \vc{v}\,]_{\xi},\,\mathbb{J}(\eta_{\xi})
  \big\rangle_{\xi} \quad 
  \forall v\in H_{\xi}(\bdy\Omega) 
  \text{ and } \forall \xi \in \bdy{\Omega},
\]
where $\langle\bcdot\,, \bcdot\rangle_{\xi}$ is the above-mentioned flat
metric on $T_{\xi}(\bdy\Omega)\otimes\C$ and, if $v = (v_1,\dots, v_n)$,
$\vc{v}$ is any $\smoo^1$-smooth section of $H^{1,0}(\bdy\Omega)$ defined
around $\xi$ such that
$\vc{v}(\xi) = \sum_{1\leq j\leq n}v_j\big(\left.
\partial/\partial z_j\right|_{\xi}\big)$. It is easy to
see that the right-hand side above does not depend on the choice of $\vc{v}$.
We must mention that the choice of the frame field $\xi\longmapsto \eta_{\xi}$
of $(T(\bdy\Omega)\otimes\C)\ominus (H(\bdy\Omega)\otimes\C)$ is such that if
$\Omega$ is Levi pseudoconvex, then $\Levi{\Omega}\geq 0$. Then, $w(\bdy\Omega)$
is the set of points in $\bdy\Omega$ at which $\bdy\Omega$ is weakly Levi
pseudoconvex: i.e., the set of all $\xi\in \bdy\Omega$ at which
$\Levi{\Omega}(\xi,\bcdot)$ is \textbf{not} strictly positive definite.%

\begin{theorem} \label{th:unbdd_visibile}
Let $\Omega \subset \Cn$, $n\geq 2$, be an unbounded Kobayashi hyperbolic
domain that is pseudoconvex and has $\smoo^2$-smooth boundary. Suppose
there exists a $\smoo^2$-smooth closed $1$-submanifold $S$ of $\bdy\Omega$
such that $w(\bdy\Omega)\subset S$.
Assume that for each $p\in w(\bdy\Omega)$, there exists a neighbourhood
$U_p$ of $p$ and $m_p > 2$ such that
\begin{equation}\label{eqn:Levi}
  \Levi{\Omega}(\xi;v) \gtrsim \dst(\xi, S)^{m_p-2}\|v\|^2 
  \quad \forall v\in H_{\xi}(\bdy\Omega) 
  \text{ and } \forall \xi \in (\bdy{\Omega}\cap U_p)\setminus S.
\end{equation}
Then, $\bdy\Omega$ is visible.    
\end{theorem}

Before we can present our next result, we need a general definition.

\begin{definition} \label{defn:hyp_imb}
Let $Z$ be a complex manifold and $Y$ a complex submanifold of $Z$. We say that
$Y$ is \emph{hyperbolically imbedded} in $Z$ if for every point $p \in
\overline{Y}$ and
for each neighbourhood $U_p$ of $p$ in $Z$, there exists a neighbourhood $V_p$
of $p$ in $Z$ with $V_p \Subset U_p$ such that 
$K_Y \big( \overline{V_p} \cap Y, Y \setminus U_p \big)  >0$. Here,
$\overline{V_p}$ is the closure of $V_p$ in $Z$.
\end{definition}

The property of being hyperbolically imbedded is relevant to a class of extension
theorems that we wish to examine further. The archetypal results of this class are:

\begin{result}[Kiernan, \cite{kiernan:1972}: paraphrased for $Y$, $Z$
manifolds] \label{r:Picard-type}
Let $Z$ be a complex manifold and let $Y \subset Z$ be a hyperbolically
imbedded relatively compact submanifold.
\begin{enumerate}[leftmargin=25pt]
  \item Then, every holomorphic map $f:\D^* \to Y$
  extends as a holomorphic map $\widetilde{f}:\D \to Z$.
\smallskip

  \item Let $X$ be a complex manifold, let
  $k = \dim_{\C}(X)$, and let $\mathcal{A} \varsubsetneq X$ be an analytic
  subvariety of $X$ of dimension $(k-1)$ having at most normal-crossing
  singularities. Then, any holomorphic map $f: X \setminus \mathcal{A}
  \rightarrow Y$ extends as a holomorphic map $\widetilde{f}:X \rightarrow Z$.
\end{enumerate}
\end{result}

Kwack had established a result of a similar nature under an analytical
hypothesis \cite[Theorem~3]{kwack:1969}, whose proof is repurposed in
\cite{kiernan:1972} to prove $(1)$ above. 
Theorems such as Result~\ref{r:Picard-type} are called
\emph{Picard-type extension theorems}. The reason for this terminology is
as follows: if $Z= \C \mathbb{P}^1$ and $Y= \C\setminus \{0, 1\}$, then $(1)$
is implied by the Big Picard Theorem.
\smallskip

It is well known that the complement of $(2n + 1)$ hyperplanes in general
position in $\C\mathbb{P}^n$ is hyperbolically imbedded in $\C\mathbb{P}^n$.
To the best of our knowledge, no general techniques are known that
tell us when $Y$ is hyperbolically imbedded in $Z$ with $\dim_{\C}(Z)$ being
arbitrary\,---\,for $Y,\, Z$ as in Definition~\ref{defn:hyp_imb}\,---\,beyond
cases where $Z = \C\mathbb{P}^n$ and
$Y$ are complements of certain divisors in $\C\mathbb{P}^n$ (in which case one relies on
\cite{kiernan:1973} by Kiernan).
It is thus natural to ask: what are some other \emph{explicit} geometric conditions on the
pair $(Y,Z)$ that would yield the same conclusions as Result~\ref{r:Picard-type}.
A good place to start would be to take $Z= \Cn$ and $Y$ a domain in $\Cn$. But observe
that if $Y \varsubsetneq \C^n$ is bounded, the extension problem becomes trivial due
to Riemann's removable singularities theorem. This is why we consider unbounded domains
in our next theorem.    

\begin{theorem} \label{th:unbdd_Picard-type}
Let $\Omega \subset \Cn, n \geq 2$, be an unbounded Kobayashi hyperbolic domain
with the properties stated in Theorem~\ref{th:unbdd_visibile}. Let $X$ be a
complex manifold, let $k = \dim_{\C}(X)$, and let $\mathcal{A} \varsubsetneq X$
be an analytic subvariety of dimension $(k-1)$ having at most normal-crossing
singularities. Then, any holomorphic map $f: X \setminus \mathcal{A}
\rightarrow \Omega$ extends as a continuous
map $\widetilde{f}:X \rightarrow \overline{\Omega}^{\infty}$.
\end{theorem}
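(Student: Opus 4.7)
Proof proposal:

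The plan is to adapt the Kwack--Kiernan strategy underlying the classical Picard-type extension theorem (cf.\ \cite{kiernan:1972}), with the hypothesis of hyperbolic imbeddedness replaced by the visibility of $\bdy\Omega$ (Theorem~\ref{th:unbdd_visibile}) in conjunction with the Kobayashi metric lower bounds (Theorem~\ref{th:kob_lower_bnd}). Using the normal-crossing hypothesis on $\mathcal{A}$ and a standard inductive scheme on the number of local irreducible components of $\mathcal{A}$ through a given point (as in \cite{kiernan:1972}), the problem reduces to showing that every holomorphic $f\colon\D^{*}\to\Omega$ admits a continuous extension $\widetilde f\colon\D\to(\Cn)^{\infty}$; equivalently, that the cluster set $C(f;0)\subset\overline{\Omega}\cup\{\infty\}$ is a singleton. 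The key metric input is that $\bdy\D(0,r)$ has $K_{\D^{*}}$-diameter at most $\pi/|\log r|\to 0$, so by Kobayashi contraction, $f(\bdy\D(0,r))$ is a connected subset of $\Omega$ whose $K_{\Omega}$-diameter tends to $0$.

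I would argue by cases on $C(f;0)$, which is compact and connected. If $C(f;0)\cap\Omega\neq\emptyset$, the local equivalence of the Kobayashi and Euclidean topologies inside $\Omega$, the vanishing $K_{\Omega}$-diameter of $f(\bdy\D(0,r))$, and a maximum-modulus argument on annuli yield that $f$ is Euclidean-bounded near $0$; Riemann's removable singularity theorem applied componentwise then extends $f$ holomorphically, and $C(f;0)=\{f(0)\}$. If $C(f;0)\subset\bdy\Omega\cup\{\infty\}$ contains two distinct points $q,q'\in\bdy\Omega$, I would invoke the classical local Kobayashi completeness at the boundary: for any $\smoo^{2}$-smooth pseudoconvex $\Omega$, $K_{\Omega}(z,K)\to\infty$ as $z\to\bdy\Omega$ for every compact $K\subset\Omega$. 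Combined with visibility of the pair $(q,q')$, this produces a contradiction: selecting $z_k$ from the sequence witnessing $f(z_k)\to q$ and $v_k\in\bdy\D(0,r_k)$ on a transition circle for the open set $\{r:f(\bdy\D(0,r))\subset U_q\}$ with $r_k/|z_k|$ bounded below (so that $K_{\D^{*}}(z_k,v_k)=O(1)$), one obtains $f(v_k)\to q''\in\bdy U_q\cap\overline\Omega$ with $q''\neq q$; Kobayashi contraction then gives $K_{\Omega}(f(z_k),f(v_k))=O(1)$, yet any $(1,1)$-almost-geodesic from $f(z_k)$ to $f(v_k)$ is forced by visibility of $(q,q'')$ to traverse a fixed compact $K\subset\Omega$, making its Kobayashi length at least $K_{\Omega}(f(z_k),K)\to\infty$\,---\,a contradiction.

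The principal obstacle is the remaining subcase where one cluster point is $q\in\bdy\Omega$ and another is $\infty$, since visibility of $\bdy\Omega$ concerns only finite boundary pairs. Here I would exploit the connectedness of $C(f;0)$: a compact connected subset of $\overline{\Omega}\cup\{\infty\}$ containing both $q$ and $\infty$ cannot be the two-point set $\{q,\infty\}$, so it must contain a third cluster point $q''\in\overline{\Omega}\setminus\{q\}$. If $q''\in\Omega$, the first case gives $\lim_{z\to 0}f(z)=q''$, contradicting $f(z_k)\to q$; if $q''\in\bdy\Omega$, then $(q,q'')$ falls under the visibility argument of the previous paragraph, yielding the same contradiction. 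In either non-trivial subcase, the technical crux is the extraction of sequences $z_k,v_k\to 0$ with $f(z_k)\to q$, $f(v_k)\to q''$, and $K_{\D^{*}}(z_k,v_k)$ uniformly bounded; the transition-radii construction described above, together with the Kobayashi-to-Euclidean comparison near $\bdy\Omega$ (a consequence of the metric lower bounds of Theorem~\ref{th:kob_lower_bnd}), permits this extraction on circles of comparable log-moduli.
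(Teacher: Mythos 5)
Your proposal takes a fundamentally different route from the paper. The paper's proof is a three-line deduction: visibility of $\bdy\Omega$ (Theorem~\ref{th:unbdd_visibile}) implies that $\Omega$ is hyperbolically imbedded in $\Cn$ (Proposition~\ref{prpn:visi_hyp-im}), and then the Joseph--Kwack extension theorem (Result~\ref{r:Picard-type_hyp-imb}) is invoked as a black box. You instead attempt to \emph{reprove} a Joseph--Kwack-type extension result from scratch via a cluster-set analysis \`a la Kwack--Kiernan, replacing hyperbolic imbedding by visibility at the key step. That is a legitimate thing to attempt, but as written the argument has a genuine gap.

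The gap is your appeal to ``the classical local Kobayashi completeness at the boundary: for any $\smoo^2$-smooth pseudoconvex $\Omega$, $K_\Omega(z,K)\to\infty$ as $z\to\bdy\Omega$ for every compact $K\subset\Omega$.'' This is not classical in the present setting, is not established anywhere in the paper, and does \emph{not} follow from the metric lower bound you cite. Theorem~\ref{th:kob_lower_bnd} gives $k_\Omega(z;v)\gtrsim \|v\|/\Tdist(z)^{1/m}$ with $m>2$, so $1/m<1$; integrating this along a path approaching $\bdy\Omega$ gives $\int_0^{\,\cdot}\! dt/t^{1/m}<\infty$, i.e., a \emph{bounded} contribution to the Kobayashi length. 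Moreover the paper explicitly declines to assume Cauchy-completeness of $(\Omega,K_\Omega)$, and $\Omega$ is unbounded, so one cannot import the bounded-pseudoconvex folklore either. Your contradiction in the two-boundary-cluster-point case rests on $K_\Omega(f(z_k),K)\to\infty$, so without this claim the argument does not close.

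There is a second, related weakness: your transition-circle construction produces $K_{\D^*}(z_k,v_k)=O(1)$, hence only $K_\Omega(f(z_k),f(v_k))=O(1)$. The information that visibility actually delivers (via Proposition~\ref{prpn:visi_hyp-im}) is a \emph{positive lower bound} $K_\Omega(\overline{V_q}\cap\Omega,\,\Omega\setminus U_q)>0$, which is perfectly consistent with $O(1)$; so even the available conclusion from visibility does not contradict your sequence. To make a Kwack-style argument work, one should instead use circles $\bdy\D(0,r)$ on which the $K_{\D^*}$-diameter is $\leq \pi/|\log r|\to 0$ and extract $z_k,v_k$ on the \emph{same} circle, so that $K_\Omega(f(z_k),f(v_k))\to 0$; this \emph{would} contradict the positive separation furnished by hyperbolic imbedding. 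In other words, the correct use of visibility here is precisely to pass to hyperbolic imbedding first and then separate distinct boundary cluster points by a positive Kobayashi distance, not to try to force infinite distance. Once you do that, you are in effect re-deriving the Joseph--Kwack theorem rather than invoking it, which is considerably more work than the paper's route and would also require care in your $\infty$-as-cluster-point subcase (where you must rule out the image escaping every compact of $\Cn$ while staying in a fixed Kobayashi ball around $K$).
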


Here, $\overline{\Omega}^{\infty}$ denotes the closure of
$\Omega$ relative to the one-point compactification of $\Cn$.
Both Theorems~\ref{th:unbdd_visibile} and~\ref{th:unbdd_Picard-type}
feature the same domain $\Omega$. This is because, as in
Result~\ref{r:Picard-type}, $\Omega$ being hyperbolically imbedded continues to
be crucial to $f: X \setminus \mathcal{A}\rightarrow \Omega$ admitting an
extension with any degree of regularity. The latter condition follows
if $\bdy\Omega$ is visible; see Proposition~\ref{prpn:visi_hyp-im}.
This\,---\,in
view of the discussion preceding Theorem~\ref{th:unbdd_Picard-type}\,---\,is the
reason for our interest in identifying unbounded visibility domains.
Since $\Omega$ in Theorem~\ref{th:unbdd_Picard-type} is not relatively compact,
it does not follow from Result~\ref{r:Picard-type}. Instead, we rely on the work
of Joseph--Kwack \cite{joseph-kwack:1994}; see
Result~\ref{r:Picard-type_hyp-imb}. Their work does not, however, provide any
geometric conditions for a domain $\Omega$, whether in $\Cn$ or in some complex
manifold, to be hyperbolically imbedded. The focus of
\cite{joseph-kwack:1994} is a set of function-theoretic
characterisations of hyperbolic imbedding. It is, therefore, natural to seek
\textbf{geometric} conditions for a domain $\Omega\varsubsetneq \Cn$ to be
hyperbolically imbedded. The hypothesis of
Theorems~\ref{th:unbdd_visibile} and~\ref{th:unbdd_Picard-type} provides
just such a condition.
Domains satisfying this hypothesis are abundant: see, for
instance, the examples given by Gaussier in 
\cite[Section~3.2]{gaussier:1999}. We conclude this section with one last
question: could one extend Theorem~\ref{th:unbdd_Picard-type} to domains
$X \setminus \mathcal{A}$ such that $\mathcal{A}$ has worse singularities?
We can show that\,---\,in the notation of
Theorem~\ref{th:unbdd_Picard-type}\,---\,a holomorphic map
$f: X \setminus\mathcal{A}\rightarrow \Omega$ with $\Omega$ hyperbolically
imbedded does not, in general, extend continuously to $X$ if the singularities
of $\mathcal{A}$ are slightly worse than normal-crossing singularities; see
Example~\ref{ex:not_ext}.%
\smallskip

\subsection{Lower bounds for the Kobayashi pseudometric}
\label{ssec:Kobayashi}
Estimates for the Kobayashi pseudometric are an essential tool for the project
discussed above. This motivates our next theorem and Theorem~\ref{th:kob_lower_bnd_general}. These
results are inspired by a well-known
result of Diederich--Forn{\ae}ss \cite[Theorem~4]{diederich-fornaess:1979}, which
provides a lower bound for the Kobayashi pseudometric $k_{\Omega}$ of a bounded
pseudoconvex domain $\Omega\subset \Cn$ with real-analytic boundary. The lower
bound that they deduce for $k_{\Omega}(z; v)$,
$(z,v)\in \Omega\times \Cn$, is in terms of some positive power of
$(1/\dst(z, \bdy\Omega))$, and has many applications. These applications are,
in part, the reason for our interest in such an estimate on domains with just
$\smoo^2$-smooth boundary. The notation in the theorem below is as described prior
to Theorem~\ref{th:unbdd_visibile}. For any $z\in \Omega$, we shall abbreviate
$\dst(z, \bdy\Omega)$ to $\Tdist(z)$.
\smallskip

\begin{theorem} \label{th:kob_lower_bnd}
Let $\Omega \subset \Cn$, $n\geq 2$, be a bounded pseudoconvex domain having
$\smoo^2$-smooth boundary. Assume that there exists a $\smoo^2$-smooth closed
submanifold of $\bdy\Omega$ such that $S$ is totally-real and such that
$w(\bdy\Omega)\subset S$. Suppose there exists a number $m > 2$ such that
\[
  \Levi{\Omega}(\xi;v) \gtrsim \dst(\xi, S)^{m-2}\|v\|^2 
  \quad \forall v\in H_{\xi}(\bdy\Omega) 
  \text{ and } \forall \xi \in \bdy{\Omega}\setminus S.
\]
Then,
there exists a constant $c > 0$ such that
\begin{equation} \label{eqn:k-metric_lb}
  k_{\Omega}(z; v) \geq 
  c\,\frac{\|v\|}
            {\Tdist(z)^{1/m}}
            \quad \forall z\in \Omega \text{ and } \forall v \in \Cn.
\end{equation}
\end{theorem}

On a first reading, the estimate \eqref{eqn:k-metric_lb} might seem
unsurprising. However, to the best of our knowledge, estimates of the form
\eqref{eqn:k-metric_lb} seen in the literature that are well-argued do not,
for $\Omega$ weakly pseudoconvex, provide an \textbf{explicit} exponent
of $\delta_{\Omega}$. Moreover, there are other significant reasons for placing
the estimate \eqref{eqn:k-metric_lb} on record. Namely:
\begin{itemize}
    \item For bounded, weakly pseudoconvex, finite-type domains $\Omega$ with
    $\bdy\Omega$ \textbf{not} real analytic, lower bounds for $k_{\Omega}$
    resembling \eqref{eqn:k-metric_lb} have been (re)asserted on many
    occasions\,---\,the earliest instance being
    \cite{cho:1992}. Each such claim has, eventually, relied on
    the difficult half of the paper \cite{catlin:1984}
    by Catlin. There seems to be a certain deficit in understanding the latter
    work\,---\,nor is there any alternative exposition on the efficacy of a
    construction, called a \emph{boundary system}, on which the proofs of
    the above-mentioned assertions rely.%
    \smallskip
    
    \item We introduce a method relying on the regularity theory for the
    complex Monge--Amp{\`e}re equation to derive lower bounds of the form
    \eqref{eqn:k-metric_lb}. One way of deriving such a bound 
    is to construct plurisubharmonic peak functions satisfying certain precise
    estimates: see, for instance, \cite[Theorem~2]{diederich-fornaess:1979}, 
    \cite[Proposition~4.2]{catlin:1989}. Similar peak functions, but with
    \textbf{less restrictive} requirements, suffice to prove the existence
    and regularity of solutions of the complex Monge--Amp{\`e}re equation, as
    one would expect from the proof of \cite[Theorem~6.2]{bedford-taylor:1976}
    (see Remark~\ref{rem:MA-eqn_why}).
    This underlies\,---\,in view of a result of
    Sibony \cite{sibony:1981}\,---\,an effective and simpler idea
    for deriving lower bounds for the Kobayashi metric.
\end{itemize}

The latter point is substantiated by a general result whose proof (similar to
that of Theorem~\ref{th:kob_lower_bnd}) is the method alluded to. For its exact
statement, we refer the reader to Theorem~\ref{th:kob_lower_bnd_general}.%
\bigskip

\section{Preliminaries on hyperbolic imbedding and visibility}
\label{sec:hyp_imb}
This section is devoted to assorted observations of a technical nature that
will be needed in our discussion surrounding
Theorems~\ref{th:unbdd_visibile} and~\ref{th:unbdd_Picard-type}. But we first
explain some notation used below and in later sections (some of which has also
been used without comment in Section~\ref{sec:intro}).
\smallskip

\subsection{Common notations}
\begin{enumerate}[leftmargin=25pt]
  \item For $v \in {\R}^d $, $\|v\|$ denotes the
  Euclidean norm. For any $x \in {\R}^d$ and $A \subset {\R}^d$, we write
  $\text{dist}(x, A)\,:=\,
  \inf \{\|x-a\|: a \in A\}$. 
  \smallskip
  \item Given a point $x \in \R^d$ and $r>0$, ${\mathbb{B}}^d(x,r)$ denotes the open
  Euclidean ball in $\R^d$ with radius $r$ and center $x$.
  \smallskip
  \item Given a point $z \in \Cn$ and $r>0$, $B^n(z,r)$ denotes the open
  Euclidean
  ball in $\Cn$ with radius $r$ and center $z$. For simplicity, we write
  $\D \,:=\, B^1(0,1)$. Also, we write $\D^*:= \D \setminus \{0\}$.
  \smallskip
  \item Given a $\smoo^2$-smooth function $\phi:\Omega \to \C$ defined in some domain
  $\Omega \subset \Cn$, $ (\cHess{\phi})(z)$ denotes the complex Hessian of $\phi$
  at $z \in \Omega$.
  \item $\langle\bcdot\,, \bcdot\rangle$ denotes the standard Hermitian inner
  product on $\Cn$.
\end{enumerate}

\subsection{Definitions and results}
We begin with an elementary fact:

\begin{lemma} \label{l:hyp_imb_intermediate}
Let $Z$ be a complex manifold. Let $X$ and $Y$ be domains in $Z$ such that
$X \varsubsetneq Y \varsubsetneq Z$. If $X$ is hyperbolically imbedded in
$Z$, then $X$ is hyperbolically imbedded in $Y$ as well.
\end{lemma}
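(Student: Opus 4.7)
The plan is to verify the definition of hyperbolic imbedding of $X$ into $Y$ directly, repurposing the data already provided by the imbedding of $X$ into $Z$. The proof should reduce to unpacking the two definitions and checking that each ingredient required in the setting $(X,Y)$ is automatically supplied by the setting $(X,Z)$, with the only subtlety being the compactness clause $V_p \Subset U_p$.

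I would proceed as follows. First, fix $p$ in the closure of $X$ in $Y$ and a neighbourhood $U_p$ of $p$ in $Y$. Since $Y$ is open in $Z$, the set $U_p$ is also open in $Z$, hence is a neighbourhood of $p$ in $Z$; and since $X \subset Y$, the closure of $X$ in $Y$ is contained in the closure of $X$ in $Z$, so $p$ belongs to the latter as well. Next, I would invoke the hypothesis that $X$ is hyperbolically imbedded in $Z$, applied to the pair $(p, U_p)$, to obtain a neighbourhood $V_p$ of $p$ in $Z$ whose closure (in $Z$) is compact and contained in $U_p$, and for which
$$
  K_X\bigl( \,\overline{V_p} \cap X,\; X \setminus U_p \,\bigr) > 0.
$$

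Finally, I would verify that this same $V_p$ witnesses the imbedding of $X$ into $Y$. Because $V_p \subset U_p \subset Y$, the set $V_p$ is a neighbourhood of $p$ in $Y$; and because $\overline{V_p}$ (closure in $Z$) is contained in $U_p \subset Y$, the closures of $V_p$ in $Y$ and in $Z$ coincide, so $V_p \Subset U_p$ continues to hold when $U_p$ is viewed as a subset of $Y$. The Kobayashi pseudodistance $K_X$ is intrinsic to $X$, so the displayed inequality is unaffected by the change of ambient space. The main obstacle — really the only point requiring care — is the bookkeeping of closures: one must confirm that the single piece of data in the definition that is sensitive to the ambient manifold, namely the relation $V_p \Subset U_p$, survives the passage from $Z$ to $Y$, which is immediate once one notes that $\overline{V_p}$ lies already inside $Y$.
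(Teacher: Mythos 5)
Your proof is correct and takes essentially the same approach as the paper's one-line argument, namely that the topological data in the definition (the relative-compactness relation $V_p \Subset U_p$ and the closure of $V_p$) transfers unchanged from $Z$ to $Y$ because $\overline{V_p}$ already lies inside $Y$; the paper records this via the identity that the closure in $Y$ equals the closure in $Z$ intersected with $Y$, whereas you observe directly that the two closures coincide here. Your version spells out the bookkeeping in a bit more detail but contains no new ideas beyond the paper's.
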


Let $p$, $U_p$ and $V_p$ be as in Definition~\ref{defn:hyp_imb}; by the fact
that the closure of $V_p\cap Y$ in $Y$ equals
$(\overline{V_p \cap Y}) \cap Y$ (where $\overline{V_p \cap Y}$ denotes the
closure in $Z$), the above result follows immediately.
\smallskip

The remainder of this section focuses on definitions and facts related to the
property of visibility of $\bdy\Omega$, $\Omega\subset \Cn$ being a domain.

\begin{definition} \label{defn:almost-geodesic}
Let $\Omega \subset \C^n$ be a domain and let $I \subset \R$ be an interval.
For $\lambda \geq 1$ and $\kappa \geq 0$, a curve $\sigma: I \to \Omega$ is
called a \emph{$(\lambda, \kappa)$-almost-geodesic} if
\begin{itemize}
  \item ${\lambda}^{-1}|t-s| - \kappa \leq K_{\Omega}(\sigma(s), \sigma(t))
  \leq \lambda |t-s| + \kappa$ for every $s,t \in I$, and
  \smallskip
  \item $\sigma$ is absolutely continuous (whereby $\sigma'(t)$ exists for
  almost every $t \in I$) and $k_{\Omega}(\sigma(t); \sigma'(t))
  \leq \lambda$ for almost every $t \in I$.
\end{itemize}
\end{definition}

Next, we present a definition that formalises one of the sufficient conditions
on a domain $\Omega\subset \Cn$ under which $\bdy\Omega$ is visible. It is an
adaptation, introduced by Bharali--Zimmer \cite{bharali-zimmer:2023}, to
unbounded domains of a well-known property.

\begin{definition} \label{defn:loc-int-cone}
Let $\Omega \subset \Cn$ be a domain. We say that $\Omega$ satisfies a
\emph{local interior-cone condition} if for each $R>0$ there exist constants
$r_0>0$, $\theta \in (0, \pi)$, and a compact subset $K \subset \Omega$,
which depend on $R$, such that for each $z \in B^n(0,R) \cap 
(\Omega \setminus K)$, there is a point $\xi_z \in \bdy \Omega$ and a unit
vector $v_z$ such that
\begin{itemize}
  \item $z= \xi_z + t v_z$ for some $t \in (0,r_0)$, and
  \smallskip
  
  \item $(\xi_z + \Gamma(v_z, \theta)) \cap B^n(\xi_z, r_0) \subset \Omega$.
\end{itemize}
Here, $\Gamma(v_z, \theta)$ denotes the open cone 
$$
  \Gamma(v_z,\theta) \,:=\, \{ w \in {\C}^n : \re \, \langle w,v_z \rangle > \cos (\theta /2)\,  \|w\| \}.
$$
\end{definition}

The following result is classical in the case when $\Omega$ is bounded. But the
choice of $K = K(R)$ in Definition~\ref{defn:loc-int-cone} requires care when
$\Omega$ is unbounded, for which reason we provide a proof.

\begin{lemma} \label{l:loc-int-cone}
Let $\Omega \subset \C^n$ be an unbounded domain with $\smoo^2$-smooth
boundary. Then, $\Omega$ satisfies a local interior-cone condition.
\end{lemma}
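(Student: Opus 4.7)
The plan is to adapt the standard argument for a uniform interior-cone condition on a bounded $\smoo^2$-smooth domain to our unbounded setting, applying that argument on the compact slab $\bdy\Omega\cap\overline{B^n(0,R+1)}$ while taking care that the auxiliary compact set $K\subset\Omega$ actually is compact despite $\Omega$ being unbounded.

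First, fix $R>0$ and set $\Gamma_R := \bdy\Omega\cap\overline{B^n(0,R+1)}$, which is compact because $\bdy\Omega$ is closed and $\overline{B^n(0,R+1)}$ is compact. Since $\bdy\Omega$ is $\smoo^2$-smooth, the tubular-neighbourhood theorem applied to $\Gamma_R$ yields some $\rho=\rho(R)>0$ such that every point in the $\rho$-neighbourhood of $\Gamma_R$ in $\Cn$ has a unique nearest point in $\bdy\Omega$, and such that for any $z$ on the $\Omega$-side of $\bdy\Omega$ within distance $\rho$ of $\Gamma_R$, the unit vector from the nearest boundary point to $z$ is the inward unit normal $\nu_{\pi(z)}$. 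Then, using a uniform second-order Taylor expansion of a local $\smoo^2$ defining function on a compact neighbourhood of $\Gamma_R$, I would extract constants $r_0\in(0,\min(\rho,1))$ and $\theta\in(0,\pi)$ such that the uniform interior-cone inclusion
$$
(\xi+\Gamma(\nu_\xi,\theta))\cap B^n(\xi,r_0)\subset \Omega
$$
holds for every $\xi\in\Gamma_R$. Both constants depend on $R$ through the uniform $\smoo^2$-bounds of a defining function on the compact set $\Gamma_R$.

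Next, I would define
$$
K := \big\{z\in\overline{B^n(0,R)}\cap\overline{\Omega} : \Tdist(z)\geq r_0/2\big\}.
$$
This set is closed as an intersection of closed sets, bounded by $R$, hence compact; and $\Tdist(z)>0$ combined with $z\in\overline\Omega$ forces $z\in\Omega$, so $K\subset\Omega$. Now for any $z\in B^n(0,R)\cap(\Omega\setminus K)$, the exclusion from $K$ gives $\Tdist(z)<r_0/2$; a nearest point $\xi_z\in\bdy\Omega$ exists (minimize over the compact set $\bdy\Omega\cap\overline{B^n(z,r_0)}$), and $\|\xi_z\|\leq\|z\|+r_0/2<R+1$ places $\xi_z\in\Gamma_R$. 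Setting $v_z := (z-\xi_z)/\|z-\xi_z\|$, the tubular-neighbourhood property identifies $v_z$ with $\nu_{\xi_z}$, and with $t:=\|z-\xi_z\|\in(0,r_0)$ one reads off both $z=\xi_z+tv_z$ and the required cone containment from the displayed inclusion above.

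The one subtlety I expect is the choice of $K$: the naive candidate $\{z\in\Omega : \Tdist(z)\geq r_0/2\}$ is in general unbounded and therefore non-compact when $\Omega$ is unbounded, so one must localise by intersecting with $\overline{B^n(0,R)}$, and one must further intersect with $\overline\Omega$ to exclude exterior points that happen to satisfy the distance bound. Everything else is classical compactness applied to $\Gamma_R$ on a $\smoo^2$-hypersurface.
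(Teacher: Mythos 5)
Your proof is correct and takes essentially the same approach as the paper: localise to a compact piece of $\bdy\Omega$ near $B^n(0,R)$, use compactness and $\smoo^2$-smoothness to obtain a uniform cone (equivalently, interior-ball) estimate on that piece, and obtain $K$ by removing a thin collar around $\bdy\Omega$ from $\overline{B^n(0,R)}\cap\overline\Omega$. The paper carries out the uniformity argument via an explicit finite subcover by local tubular-neighbourhood balls (and then inscribes the cone in an interior ball), whereas you invoke the tubular-neighbourhood theorem and a uniform Taylor expansion more abstractly and cut $K$ using $\Tdist$ rather than a union of balls around boundary points, but these are only cosmetic differences in the same strategy.
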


\begin{proof}
Since $\bdy \Omega$ is $\smoo^2$-smooth, for each $p \in \bdy \Omega$, we can find
balls $W_p:= B^n(p, R_p), \,
V_p:=  B^n(p, r_p)$ with $0 < r_p < R_p$ such that the following holds:
\begin{enumerate}[leftmargin=25pt, label=$(\alph*)$]
  \item For each $z \in V_p \cap \Omega$, there exists unique
  $\xi_z \in \bdy \Omega$ such that $\| \xi_z - z \| = \delta_{\Omega}(z)$
  and $\xi_z \in W_p$,

  \smallskip

  \item If $\xi_1 \neq \xi_2 \in \bdy \Omega \cap  B^n(p, R_p)$, then $\{\xi_1 + t \eta_{\xi_1}: t \geq 0 \}
  \cap \{\xi_2 + t \eta_{\xi_2}: t \geq 0 \} \cap B^n(p, r_p) = \emptyset$ (where $\eta_{\xi}$ denotes
  the inward unit normal at $\xi \in \bdy \Omega$).
\end{enumerate}
\smallskip

Fix $R>0$. If $\overline{B^n(0,R)}\cap \Omega = \emptyset$ or if $\overline{B^n(0,R)}
\subset \Omega$, then the two conditions in Definition~\ref{defn:loc-int-cone}
hold true vacuously (taking $K = \overline{B^n(0,R)}$ in the latter case).
Hence, fix $R>0$ such that $\bdy \Omega \cap \overline{B^n(0, R)} 
\neq \emptyset$. Write $ S:= \bdy \Omega \cap \overline{B^n(0, R)}$. Let
$W := \bigcup_{p \in S} W_p$ and
$V:=\bigcup_{p\in S} V_p $. As $S$ is compact, we can find a finite subcover $\{V_1, \cdots, V_k\}$ of
$\{V_p: p \in S\}$ that covers $S$. Write $ V_j := B^n(p_j, r_j)$ and $W_j := B^n(p_j, R_j)$. We can
choose $r>0$ sufficiently small such that
\begin{align} \label{eqn:choice-of-r}
  \overline{\bigcup\nolimits_{p \in S} B^n(p,r)} \subset \bigcup\nolimits_{j=1}^{k} V_j.
\end{align}

Let $K$ be the compact set, $K \subset \Omega$, defined as follows:
$$
  K:=\overline{B^n(0,R)} \bigcap \big( \overline{\Omega} \setminus \bigcup\nolimits_{p \in S} B^n(p, r/2) \big).
$$
Let $r_0:= r/2$. (Note that $K$ and $r_0$ depend on $R$.)
\smallskip

\textbf{Fix} $z \in B^n(0,R) \cap (\Omega \setminus K)$. Then, $z \in \bigcup_{p \in S} B^n(p, r/2)$. Hence,
by \eqref{eqn:choice-of-r}, $z \in \bigcup_{j=1}^{k} V_j$. Thus, there exists a unique point in $\bdy \Omega$,
call it $\xi_z$, such that $\delta_{\Omega}(z)= \| z - \xi_z\|$. Let $\eta_{\xi_z}$ denote the inward unit
normal vector to $\bdy \Omega$ at $\xi_z$. Let $z':= \xi_z + (r/2) \eta_{\xi_z}$. If $p$ is a point in $S$
such that $z\in B^n(p, r/2)$, then it is immediate that:
\begin{itemize}
  \item $\|z- z'\| = |r/2 - \Tdist(z)|$, whereby $z'\in B^n(p, r)$.
  \item If, for some $j=1,\dots, k$, $V_j$ contains $z'$ (owing to \eqref{eqn:choice-of-r}), then
  (with $\xi_{z'}$ having a meaning analogous to $\xi_{z}$) $\xi_{z}, \xi_{z'}\in B^n(p_j, R_j) =: W_j$.
\end{itemize}
Thus, by the property of the pair $(V_j, W_j)$ given by $(b)$ above, $\xi_{z} = \xi_{z'}$; thus
$\delta_{\Omega}(z')= \|z'-\xi_z\|= r/2$. Therefore, $B^n(z', r/2) \subset \Omega$. 
\smallskip

Now, clearly, $z= \xi_z + t \eta_{\xi_z}$, where $t= \|z - \xi_z\|< r/2=r_0$. Also, it is easy to see that
there exists a uniform $\theta\in (0, \pi)$ such that
$$
  (\xi_z + \Gamma(\eta_{\xi_z}, \theta)) \cap B^n(\xi_z, r_0) \subset B^n(z', r_0) \cap B^n(\xi_z, r_0)
  \subset \Omega.
$$
Here, $\theta$ is given by the following:
\begin{align*}
  \cos(\theta/2) &= {\re \langle \eta_{\xi_z}, v\rangle}/\|v\| \\
  &= {\re \langle \eta_{\xi_z}, v\rangle}/r_0, 
  \quad \text{where } v\in \bdy{B^n(\xi_z, r_0)}\cap \bdy{B^n(z', r_0)}.
\end{align*}
In the above expression, $\theta$ is independent of the choice of $v$ as the inner product depends only on
$r_0$. For this reason, $\theta$ is also independent of $z\in B^n(0,R) \cap (\Omega \setminus K)$. This
establishes the conditions given in Definition~\ref{defn:loc-int-cone}.
\end{proof}

The next result is a version of a result due to Sarkar 
\cite[Proposition~3.2-(3)]{sarkar:2023}. However, unlike in
\cite[Proposition~3.2-(3)]{sarkar:2023}, we are given that $\bdy\Omega$ is visible as a part of the
hypothesis of the result below. This results
in a simpler proof than in \cite{sarkar:2023}. Since it is so vital to
proving Theorem~\ref{th:unbdd_Picard-type}, we shall provide a proof of the
following

\begin{proposition} \label{prpn:visi_hyp-im}
Let $\Omega \subset \C^n$ be an unbounded Kobayashi hyperbolic domain and suppose $\bdy\Omega$ is
visible. Then, $\Omega$ is hyperbolically imbedded in $\Cn$.
\end{proposition}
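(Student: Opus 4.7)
The plan is to argue by contradiction, verifying the hyperbolic-imbedding condition at each $p \in \overline{\Omega}$. The case $p \in \Omega$ is quick: since $\Omega$ is Kobayashi hyperbolic, $K_{\Omega}$ is a continuous distance inducing the Euclidean topology on $\Omega$, so choosing $V_p$ a sufficiently small Euclidean ball around $p$ with $V_p\Subset U_p\cap\Omega$, any failure sequence $z_n\in \overline{V_p}$, $w_n\in\Omega\setminus U_p$ with $K_{\Omega}(z_n,w_n)\to 0$ would (along a subsequence $z_n\to z\in \overline{V_p}$) force $w_n\to z$ in the Euclidean topology, contradicting $w_n\notin U_p$.

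Now suppose $p\in \bdy\Omega$ and the condition fails at $p$ for some neighbourhood $U_p$. Taking $V_p = B^n(p,1/n)$, extract sequences $z_n\to p$ in $\Omega$ and $w_n\in \Omega\setminus U_p$ with $K_{\Omega}(z_n,w_n)\to 0$. A subsequence of $\{w_n\}$ is either bounded in $\Cn$ or escapes to $\infty$. In the bounded case, pass to a subsequence so that $w_n\to q\in \overline{\Omega}\cap(\C^n\setminus U_p)$, hence $q\neq p$. If $q\in \Omega$, continuity of $K_{\Omega}$ gives $K_{\Omega}(w_n,q)\to 0$, whence $K_{\Omega}(z_n,q)\to 0$ and $z_n\to q$ in the Euclidean topology\,---\,contradicting $z_n\to p\neq q$. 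The principal case is $q\in \bdy\Omega$: invoke visibility of the pair $(p,q)$ to obtain neighbourhoods $U_p'\ni p$, $U_q'\ni q$ with $\overline{U_p'}\cap\overline{U_q'}=\emptyset$ and, for each $\kappa\ge 0$, a compact $K_{\kappa}\Subset\Omega$ that meets every $(1,\kappa)$-almost-geodesic with endpoints in $U_p'$ and $U_q'$. Since every $(1,\kappa)$-almost-geodesic with $\kappa\le 1$ is also a $(1,1)$-almost-geodesic, I fix a single $K:=K_1$ that works uniformly for all $\kappa\in(0,1]$.

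For $n$ large, $z_n\in U_p'$ and $w_n\in U_q'$; by the existence of almost-geodesics in Kobayashi hyperbolic domains \cite[Prop.~5.3]{bharali-zimmer:2023}, for each $\kappa\in(0,1]$ take a $(1,\kappa)$-almost-geodesic $\sigma_n^{\kappa}:[0,T_n^{\kappa}]\to\Omega$ joining $z_n$ to $w_n$. By visibility it meets $K$ at some time $t_n^{\kappa}$, and the almost-geodesic inequalities yield
\[
  K_{\Omega}\bigl(z_n,\sigma_n^{\kappa}(t_n^{\kappa})\bigr)\,\le\, t_n^{\kappa}+\kappa\,\le\, T_n^{\kappa}+\kappa\,\le\, K_{\Omega}(z_n,w_n)+2\kappa.
\]
Diagonalising (choose $n_j$ with $K_{\Omega}(z_{n_j},w_{n_j})<1/j$ and set $y_j:=\sigma_{n_j}^{1/j}(t_{n_j}^{1/j})\in K$) gives $K_{\Omega}(z_{n_j},y_j)\le 3/j$. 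Compactness of $K$ produces a subsequential limit $y_j\to y^{*}\in K\subset\Omega$, and continuity of $K_{\Omega}$ on $\Omega$ then gives $K_{\Omega}(z_{n_j},y^{*})\to 0$, so $z_{n_j}\to y^{*}$ in the Euclidean topology. But $z_{n_j}\to p\in \bdy\Omega$ forces $y^{*}=p$, contradicting $y^{*}\in K\subset\Omega$.

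The remaining sub-case $w_n\to \infty$ in $\C^n$ is the principal obstacle, since Definition~\ref{defn:visi} engages only pairs of \emph{finite} boundary points. My plan there is to rule this case out: a $(1,1)$-almost-geodesic $\sigma_n$ from $z_n$ to $w_n$ has Kobayashi length at most $K_{\Omega}(z_n,w_n)+1\to 1$, and I would argue that such a Kobayashi-short curve, starting from a bounded region near $p$, cannot exit every Euclidean-compact subset of $\Omega$ without producing a point $w_n'$ on $\sigma_n$ which is Euclidean-close to some boundary point $q\in \bdy\Omega\setminus\{p\}$\,---\,at which stage the previous sub-case applies with $w_n'$ in place of $w_n$. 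Making this extraction precise, and handling how almost-geodesics can concentrate near the unbounded part of $\bdy\Omega$, is the most delicate step and where I expect the bulk of the care.
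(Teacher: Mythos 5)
Your argument for the bounded case is correct and essentially parallels the paper's, though your diagonalization over $\kappa$ is unnecessary\,---\,a single sequence of $(1,1/\nu)$-almost-geodesics from $z_\nu$ to $w_\nu$ suffices, since after any truncation these remain $(1,1)$-almost-geodesics. However, the case you flag as ``the principal obstacle'' and leave unresolved\,---\,$w_n\to\infty$\,---\,is a genuine gap, and your sketch of how to close it (extracting a bounded point on $\sigma_n$ Euclidean-close to some $q\in\bdy\Omega\setminus\{p\}$) is vaguer than it needs to be and does not obviously go through: nothing a priori prevents a Kobayashi-short almost-geodesic from running out to infinity while staying at positive Euclidean distance from the bounded part of $\bdy\Omega$.

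The paper's device, which dissolves both sub-cases at once, is the following. First, one may take $U_p$ (and hence $V_p$) to be \emph{bounded} without loss of generality: shrinking $U_p$ only strengthens the desired positivity, so it suffices to verify the imbedding condition for bounded $U_p$. Then, instead of working with the endpoint $w_\nu$, define the first exit time $t_\nu:=\inf\{t:\sigma_\nu(t)\in\Omega\setminus U_p\}$ and set $\zeta_\nu:=\sigma_\nu(t_\nu)\in\bdy U_p\cap\Omega$, which lies in the \emph{bounded} set $\bdy U_p$ regardless of whether $w_\nu$ escapes to infinity. The truncated curve $\sigma_\nu|_{[a_\nu,t_\nu]}$ is a $(1,1)$-almost-geodesic from $z_\nu$ to $\zeta_\nu$. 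Now either $\{\zeta_\nu\}\Subset\Omega$ (take $K$ to be its closure) or a subsequence $\zeta_{\nu_k}\to\eta\in\bdy\Omega\cap\bdy U_p$, and since $p\in U_p$ while $\eta\in\bdy U_p$, automatically $\eta\neq p$; visibility of the pair $(p,\eta)$ applied to the truncated almost-geodesics then produces the compact $K$. The rest of your argument (pick $o_k\in{\sf image}(\sigma_{\nu_k})\cap K$, use the almost-geodesic inequalities to force $z_{\nu_k}\to o$ and $w_{\nu_k}\to o$ for some $o\in K$, contradiction) applies verbatim. You should also note that your separate treatment of $p\in\Omega$, while correct, is not in the paper\,---\,the paper proves the boundary case only, the interior case being standard.
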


\begin{proof}
Let $p \in \bdy \Omega$. Fix a pair of bounded $\Cn$-neighbourhoods $U_p, V_p$ of $p$ such that $V_p
\Subset U_p$. It suffices to show that
$K_\Omega \big(\overline{V_p} \cap \Omega, \Omega \setminus U_p \big)>0.$
\smallskip

We prove the above by contradiction. Assume that $K_\Omega \big(\overline{V_p} \cap \Omega, \Omega \setminus
U_p \big)=0$. Then, there exist a pair of sequences $\{z_{\nu} \} \subset \overline{V_p} \cap \Omega$ and
$\{w_{\nu}\} \subset \Omega \setminus U_p $ such that
$K_{\Omega}(z_{\nu}, w_{\nu}) \to 0$ as $\nu \to \infty$.
As $\Omega$ is Kobayashi hyperbolic, by \cite[Proposition~5.3]{bharali-zimmer:2023}, for each $\nu$ there
exists a $(1,1/{\nu})$-almost-geodesic $\sigma_{\nu}:[a_{\nu}, b_{\nu}] \to \Omega$ joining $z_{\nu}$ and
$w_{\nu}$.

\medskip
\noindent{\textbf{Claim.}} There exist a subsequence $\{(z_{\nu_k},
w_{\nu_k})\}$ of $\{(z_{\nu}, w_{\nu})\}$ and a compact $K \subset \Omega$
such that $\sigma_{\nu_k} ([a_{\nu_k}, b_{\nu_k}])  \cap K \neq
\emptyset$ for all $k$.
\smallskip

\noindent{\emph{Proof of claim:}} Suppose $\{z_{\nu}: \nu \in \Z_+\} \Subset
\Omega$. Let $K:= \overline{\{z_{\nu}: \nu \in \Z_+\}}$, which is
contained in $\Omega$ and is compact, since $V_p$ is bounded. Clearly,
$\sigma_{\nu}([a_{\nu}, b_{\nu}])\cap K \neq \emptyset$ for all $\nu$.
\smallskip

Now, suppose $\overline{\{z_{\nu}: \nu \in \Z_+\}} \nsubseteq \Omega$. Then,
passing to a subsequence and relabelling, if needed, we may assume that
$z_{\nu} \to \xi$, for some $\xi \in \bdy \Omega \cap \overline{V_p}$.
For each $\nu$, define
$$
  t_{\nu}:= \inf \{t \in [a_{\nu}, b_{\nu}]: \sigma_{\nu}(t) \in \Omega
  \setminus U_p \}.
$$
Clearly, $ t_{\nu} \in (a_{\nu}, b_{\nu})$ and $\sigma_{\nu}(t_{\nu}) \in 
\bdy U_p \cap \Omega$. Write $\zeta_{\nu}:= \sigma_{\nu}(t_{\nu})$. As before,
if $\{\zeta_{\nu}: \nu \in \Z_+\} \Subset \Omega $, then
$K:= \overline{\{\zeta_{\nu}: \nu \in \Z_+\}}$ is our desired compact set that
intersects the image of $\sigma_{\nu}$ for each $\nu$. If not, then we get a
subsequence $\{\zeta_{\nu_k}\}$ of $\{\zeta_{\nu}\}$ and a point $\eta \in
\bdy \Omega \cap \bdy U_p$ such that $\zeta_{\nu_k} \to \eta$. Clearly,
$\eta \neq \xi = \lim_{k\to \infty}z_{\nu_k}$. Observe that 
$\wt{\sigma}_k := \sigma_{\nu_k}|_{[a_{\nu_k}, t_{\nu_k}]}$ is a
$(1,1)$-almost-geodesic joining $z_{\nu_k}$ and $\zeta_{\nu_k}$. Thus,
as $\bdy \Omega$ is visible and as $\xi$ and $\eta$ are distinct
boundary points, there is a compact $K \subset \Omega$ such that
${\sf image}(\wt{\sigma}_k)\cap K \neq \emptyset$ for every $k$
sufficiently large, from which the claim follows. \hfill $\btl$
\smallskip

Now, let $o_{k}:=\wt{\sigma}_{k}(s_{k}) \in {\sf image}(\wt{\sigma}_{k})\cap K$.
Without loss of generality, we can assume that there is a point $o \in K$ such that
$o_{k} \to o$. Using the fact that $\sigma_{{\nu}_k}:
[a_{\nu_k}, b_{\nu_k}] \to \Omega$ is a $(1,1/\nu_k)$-almost-geodesic, we get
\begin{align*}
  K_{\Omega}(z_{\nu_k}, o_{k}) + K_{\Omega}( o_{k}, w_{\nu_k}) 
  &\leq (s_{k} - a_{\nu_k}) + (b_{\nu_k} - s_{k}) + 2/\nu_k \\
  &=(b_{\nu_k} - a_{\nu_k}) + 2/{\nu_k} \leq 
  K_{\Omega}(z_{\nu_k}, w_{\nu_k}) + 3/\nu_k
  \quad \forall k.
\end{align*}
By assumption, the right-hand side of the above inequality goes to $0$ as
$k \to \infty$. Then, as $\Omega$ is Kobayashi hyperbolic, we must have
$$
  \lim_{k\to \infty} z_{\nu_k} = \lim_{k\to \infty} w_{\nu_k}= \lim_{k\to \infty} o_{k} = o,
$$
which is impossible since $(\overline{V}_p\cap \overline{\Omega})\cap
(\overline{\Omega}\setminus U_p) = \emptyset$.  We have arrived at a
contradiction. This proves the result.
\end{proof}

\section{Analytical preliminaries} \label{sec:analytic}
This section is devoted to definitions and results that will be essential to
the proofs of Theorems~\ref{th:kob_lower_bnd} and~\ref{th:unbdd_visibile}.
Recall that the exterior derivative $d = (\bdy + \dbar)$ and
$d^c := i(\bdy - \dbar)$. Let $\Omega\subset \Cn$, $n\geq 2$, be a bounded
domain. Given two functions $\phi \in \smoo(\bdy\Omega; \R)$ and 
$h\in \smoo(\Omega; \R)$, $h\geq 0$, the \emph{Dirichlet problem for the complex
Monge--Amp{\`e}re equation} is the non-linear boundary-value problem that seeks
a function $u \in \smoo(\overline\Omega; \R)$ such that $u|_{\Omega}$ is
plurisubharmonic (which we shall denote as 
$u\in {\rm psh}(\Omega)\cap \smoo(\overline\Omega)$ such that
\begin{align}
  (dd^c{u})^n\,
  :=\,\underbrace{dd^c{u}\wedge\dots \wedge dd^c{u}}_{n \ \text{factors}}\,
  &=\,h\V_n, \label{eqn:current}\\
  u|_{\bdy\Omega}\,&=\,\phi, \notag
\end{align}
where $\V_n$ is defined as
$$
  \V_n := (i/2)^n(dz_1\wedge d\zbar_1)\wedge\dots
  \wedge (dz_n\wedge d\zbar_n).
$$
When $u|_{\Omega}\notin \smoo^2(\Omega; \R)$, the left-hand side of
\eqref{eqn:current} must be interpreted as a current of bidegree $(n, n)$. That
this makes sense when $u\in {\rm psh}(\Omega)\cap \smoo(\overline\Omega)$
was established by Bedford--Taylor \cite{bedford-taylor:1976}.
\smallskip

Our objective in considering the above Dirichlet problem is as follows. With
$\Omega$, $h$, and $\phi$ as above, any solution of this problem is
a function $u\in {\rm psh}(\Omega)\cap \smoo(\overline\Omega)$ that satisfies
$u|_{\bdy\Omega} = \phi$; we would like to establish that there exist functions
with the latter properties that belong to some H{\"o}lder class on
$\overline\Omega$\,---\,assuming that $\bdy\Omega$ is sufficiently ``nice'' and
$\phi$ is sufficiently regular. The regularity theory for the complex
Monge--Amp{\`e}re equation provides us the means to the latter end. A
regularity theorem of the type hinted at for $\Omega$ strongly pseudoconvex
was established by Bedford--Taylor \cite[Theorem~9.1]{bedford-taylor:1976}.
Such theorems are much harder to deduce when $\Omega$ is \emph{weakly}
pseudoconvex. One such theorem is a special case of a result by Ha--Khanh
\cite{ha-khanh:2015}. Recall that $\langle\bcdot\,, \bcdot\rangle$
denotes the standard Hermitian inner product on $\Cn$.

\begin{result}[special case of {\cite[Theorem~1.5]{ha-khanh:2015}}]
\label{r:Holder_reg}
Let $\Omega\subset \Cn$, $n\geq 2$, be a bounded pseudoconvex domain having
$\smoo^2$-smooth boundary, let $\rho$ be a defining function of $\Omega$, and
let $m\geq 2$. Suppose 
\begin{itemize}
  \item[$(*)$] there
  exists a neighbourhood $U$ of $\bdy\Omega$, constants $c, C > 0$ and, for
  each $\delta > 0$ sufficiently small, there exists a plurisubharmonic
  function $\varphi_{\delta}$ on $U$ of class $\smoo^2$ such that 
  $|\varphi_{\delta}|\leq 1$ and such that
  \begin{align}
  \langle v, (\cHess\varphi_{\delta})(z)v\rangle &\geq 
  c\,(1/\delta)^{2/m}\|v\|^2 \quad \forall v\in \Cn, \label{eqn:Hess_lb}\\
  \|D\varphi_{\delta}(z)\| &\leq C/\delta, \label{eqn:grad_ub}
  \end{align}
  for each $z\in \rho^{-1}((-\delta, 0))$.
\end{itemize}
Let $\phi\in \smoo^{s,\,\alpha}(\bdy\Omega)$, $s=0, 1$, $\alpha\in (0, 1]$.
Then, the Dirichlet problem
\begin{align*}
  (dd^c{u})^n &= 0, \\
  u|_{\bdy\Omega} &= \phi, \notag
\end{align*}
has a unique plurisubharmonic solution $u\in
\smoo^{0,\,(s+\alpha)/m}(\overline\Omega)$.
\end{result}

The notation $\smoo^{j,\,\beta}$, $j\in \N$,
$\beta\in (0, 1]$, denotes the class of all \textbf{real}-valued functions
that are continuously differentiable to order $j$ (the latter being
suitably interpreted for the underlying space when $j\geq 1$) and
whose $j$-th partial derivatives satisfy a uniform H{\"o}lder condition with
exponent $\beta$. In what follows, if $j=0$ and $\beta\in (0,1)$, we shall
denote this class simply as $\smoo^{\beta}$.
\smallskip

The following result provides the connection between Result~\ref{r:Holder_reg}
and the condition on the Levi form in Theorems~\ref{th:kob_lower_bnd}
and~\ref{th:unbdd_visibile}.

\begin{result}[special case of {\cite[Theorem~2.1]{khanh-zampieri:2021}}]
\label{r:geometric}
Let $\Omega \subset \Cn$, $n\geq 2$, be a bounded pseudoconvex domain having
$\smoo^2$-smooth boundary. Assume there exists a $\smoo^2$-smooth closed
submanifold $S$ of $\bdy\Omega$ such that $S$ is totally-real and such that
$w(\bdy \Omega)\subset S$. Suppose there exists a number $m > 2$ such that
\begin{equation} \label{eqn:levi_degn}
  \Levi{\Omega}(\xi;v) \gtrsim \dst(\xi, S)^{m-2}\|v\|^2 
  \quad \forall v\in H_{\xi}(\bdy\Omega) 
  \text{ and } \forall \xi \in \bdy{\Omega}\setminus S.
\end{equation}
Then, $\Omega$ satisfies the condition $(*)$ in Result~\ref{r:Holder_reg}.
\end{result}

\begin{remark}
Some comments about Result~\ref{r:geometric} are in order. Firstly, 
\cite[Theorem~2.1]{khanh-zampieri:2021} is stated for $q$-pseudoconvex domains
satisfying a somewhat more general condition than \eqref{eqn:levi_degn}.
Result~\ref{r:geometric} is obtained by taking:
\begin{itemize}
  \item $q=1$, and
  \item $F(t) = ct^m$, $t > 0$, for some $c>0$,
\end{itemize}
in \cite[Theorem~2.1]{khanh-zampieri:2021}. (It must be noted that there is
a small typo in the description of $F$ in \cite{khanh-zampieri:2021}; the
asymptotic behaviour required of $F$ is $F(\delta)/\delta^2 \searrow 0$ as
$\delta \searrow 0$ and not what is stated on
\cite[p.~2769]{khanh-zampieri:2021}.) Secondly, the proof in
\cite{khanh-zampieri:2021} establishes just the estimate \eqref{eqn:Hess_lb}
(which is condition (1.5) in \cite{khanh-zampieri:2021}). However, it is evident
from the expression for $\varphi_{\delta}$ given that, since $m > 2$, the
estimate \eqref{eqn:grad_ub} is satisfied.
\end{remark}

We require one last result for proving Theorems~\ref{th:kob_lower_bnd}
and~\ref{th:unbdd_visibile}. 

\begin{result}[paraphrasing {\cite[Proposition~6]{sibony:1981}}]
\label{r:Kob_low_bd_Sibony}
Let $\Omega \subset \Cn$ be a domain and $z\in \Omega$. If there exists a negative
plurisubharmonic function $u$ on $\Omega$ that is of class $\mathcal{C}^2$
in a neighbourhood of $z$ and satisfies 
$$
  \langle v, (\cHess{u})(z) v \rangle \geq c \|v\|^2 \quad
  \forall v \in \Cn,
$$
for some $c > 0$, then 
$$
  k_{\Omega}(z;v) \geq \Big( \frac{c}{\alpha} \Big)^{1/2}\frac{\|v\|}{|u(z)|^{1/2}}
  \quad \forall v \in \Cn,
$$
where $\alpha > 0$ is a universal constant.
\end{result}

\section{Lower bounds for the Kobayashi metric} \label{sec:kob_lower}
We begin by stating and proving the general result relying on the complex
Monge--Amp{\'e}re equation to estimate the Kobayashi metric that was hinted at
in Section~\ref{ssec:Kobayashi}. Before we state it, we need a definition.

\begin{definition}
A function $\omega: [0,\infty)\to [0,\infty)$ is called a \emph{modulus of
continuity} if it is concave, monotone increasing, and such that
$\lim_{x\to 0^+}\omega(x) = \omega(0) = 0$.
\end{definition}

\begin{theorem} \label{th:kob_lower_bnd_general}
Let $\Omega \subset \Cn$, $n\geq 2$, be a bounded domain. Suppose there exists
a modulus of continuity $\omega: ([0, \infty), 0)\to ([0, \infty), 0)$ and
that, for each Lipschitz function $\phi: \bdy\Omega\to \R$, there
exists a function $u_{\phi}: \overline\Omega\to \R$ such that
$u_{\phi}|_{\Omega}$ solves the complex Monge--Amp{\`e}re equation
\begin{align*}
  (dd^c{u})^n &= 0, \\
  u|_{\bdy\Omega} &= \phi, \notag
\end{align*}
and satisfies
\begin{equation} \label{eqn:mod_cont}
  |u_{\phi}(z_1) - u_{\phi}(z_2)| \leq C_{\phi}\,\omega(\|z_1 - z_2\|)
  \quad \forall z_1, z_2\in \overline\Omega,
\end{equation}
for some constant $C_{\phi} > 0$. Then there exists a constant $c > 0$ such
that
\begin{equation} 
  k_{\Omega}(z; v) \geq 
  c\,\frac{\|v\|}
            {\omega(\Tdist(z))^{1/2}}
            \quad \forall z\in \Omega \text{ and } v \in \Cn.
\end{equation}
\end{theorem}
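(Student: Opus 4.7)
The plan is to apply Sibony's lower bound (Result~\ref{r:Kob_low_bd_Sibony}) to a non-positive plurisubharmonic function on $\Omega$ built from the Monge--Amp{\`e}re solution with Lipschitz boundary data tailored to the base point. Since $\Omega$ is bounded, the desired inequality is trivial whenever $\Tdist(z)$ is bounded away from $0$ (by comparison with the Kobayashi metric of a large enclosing ball), so I would focus on $z$ close to $\bdy\Omega$.

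Fix such a $z$, write $\delta := \Tdist(z)$, and choose $\xi \in \bdy\Omega$ with $\|z - \xi\| = \delta$. Apply the hypothesis to the Lipschitz boundary datum $\phi_\xi(\zeta) := -\|\zeta - \xi\|^2\big|_{\bdy\Omega}$, whose Lipschitz constant is bounded uniformly in $\xi$ by $2\,\mathrm{diam}(\Omega)$; this yields a Monge--Amp{\`e}re solution $u := u_{\phi_\xi}$ satisfying $|u(w_1) - u(w_2)| \leq C\,\omega(\|w_1 - w_2\|)$ with $C$ taken uniform in $\xi$ (the natural setting in which the hypothesis is used, as exemplified by Result~\ref{r:Holder_reg}). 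The key candidate is the function $g(\zeta) := u(\zeta) + \|\zeta - \xi\|^2$, which is continuous on $\overline{\Omega}$, plurisubharmonic on $\Omega$, and vanishes on $\bdy\Omega$; the maximum principle for plurisubharmonic functions forces $g \leq 0$ on $\overline{\Omega}$, and in particular $u(z) \leq -\delta^2 < 0$. Since $u(\xi) = 0$, the modulus-of-continuity bound gives $|u(z)| \leq C\,\omega(\delta)$, whence $|g(z)| \leq |u(z)| \leq C\,\omega(\delta)$. Moreover, because $dd^c g \geq dd^c \|\zeta - \xi\|^2$, the distributional complex Hessian of $g$ satisfies $\langle v, (\cHess g)(\zeta) v\rangle \geq \|v\|^2$ for every $v \in \Cn$. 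Applying Sibony's estimate to $g$ at $z$ then yields
\[
    k_{\Omega}(z; v) \;\geq\; (1/\alpha)^{1/2}\,\frac{\|v\|}{|g(z)|^{1/2}} \;\geq\; c\,\frac{\|v\|}{\omega(\delta)^{1/2}},
\]
with a constant $c > 0$ that is universal (because $C$ is).

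The principal obstacle I anticipate is the regularity gap: Result~\ref{r:Kob_low_bd_Sibony} is stated for functions of class $\smoo^2$ in a neighbourhood of $z$, whereas $g$ inherits only the continuity of the Bedford--Taylor solution $u$. I would bridge this gap by locally mollifying $u$ on the ball $B^n(z, \delta/2) \Subset \Omega$ to obtain $\smoo^\infty$ plurisubharmonic approximants $u_\eps \searrow u$ (uniform convergence on compact subballs), and then gluing $u_\eps + \|\zeta - \xi\|^2$ (on a small ball around $z$) to $g - \tau$ (elsewhere on $\Omega$) through Demailly's regularized maximum, the parameter $\tau > 0$ being chosen to dominate the mollification error so that the two pieces agree in the overlap. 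This produces a genuinely $\smoo^2$ non-positive plurisubharmonic function on $\Omega$ with the same Hessian lower bound near $z$ and value at $z$ of the same order as $|g(z)|$; Sibony's estimate applies directly, and letting $\eps, \tau \to 0$ recovers the bound with a uniform constant. An alternative\,---\,and, in my view, cleaner\,---\,route is to invoke the version of Sibony's proposition for continuous plurisubharmonic functions with distributional Hessian lower bounds, which is what the original argument in \cite{sibony:1981} actually produces.
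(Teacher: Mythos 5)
Your overall strategy---producing a negative plurisubharmonic barrier from the Monge--Amp\`ere solution and applying Sibony's lower bound---matches the paper's, but the smoothing step you offer as your primary route fails. Mollification \emph{increases} a plurisubharmonic function: $u_\eps \geq u$ pointwise, so $u_\eps + \|\zeta - \xi\|^2 \geq g$ wherever both are defined. For Demailly's regularized maximum of $u_\eps + \|\zeta - \xi\|^2$ (on a small ball $B$ about $z$) and $g - \tau$ (on $\Omega$) to glue into a single function on $\Omega$, you would need $g - \tau$ to strictly dominate $u_\eps + \|\zeta - \xi\|^2$ near $\bdy B \cap \Omega$; but there $u_\eps + \|\zeta - \xi\|^2 \geq g > g - \tau$, so no $\tau > 0$ works---the mollification error has the wrong sign. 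Shifting the local piece down by some constant instead does not help: you would then need $u_\eps - u$ to be simultaneously small near $\bdy B$ and large near $z$, which you cannot arrange.

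The paper circumvents this via Richberg's theorem \cite[Satz~4.2]{richberg:1968}: since $\Phi := u_\phi + \|\cdot\|^2$ is a continuous \emph{strictly} plurisubharmonic function, there is a globally defined $\smoo^\infty$ plurisubharmonic $\Psi \geq \Phi$ whose deviation from $\Phi$ is controlled by $\omega$ near $\bdy\Omega$, and one sets $U := \Psi + \|\cdot\|^2$. (The datum $\phi = -2\|\cdot\|^2$ rather than $-\|\cdot\|^2$ is used precisely so that one can both absorb a $+\|\cdot\|^2$ into $\Phi$, making it strictly plurisubharmonic so that Richberg applies, and still have $U$ vanish on $\bdy\Omega$.) Your parenthetical alternative---a version of Sibony's estimate for continuous plurisubharmonic functions with a distributional Hessian lower bound, which is essentially what \cite{sibony:1981} proves---is also a valid fix and would be the shortest route, but as written you do not commit to it. Finally, note that your use of the base-point-dependent family $\phi_\xi$ requires $C_{\phi_\xi}$ to be uniform in $\xi$: this is true, since $\phi_\xi$ differs from $\phi_0 := -\|\cdot\|^2$ by a pluriharmonic affine term with Lipschitz constant $\lesssim \mathrm{diam}(\Omega)$ and, by uniqueness of the Dirichlet problem, $u_{\phi_\xi}$ differs from $u_{\phi_0}$ by that same term; but it needs to be said, and the paper sidesteps it entirely by using a single boundary datum.
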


\begin{remark}
The hypothesis of Theorem~\ref{th:kob_lower_bnd_general} is essentially a
statement about the geometry of $\Omega$. It is well understood that the
boundary-regularity of the solutions of the complex Monge--Amp{\`e}re
equation is influenced by $\bdy\Omega$. Moreover, the existence
of $u_{\phi}$ too is constraint on $\Omega$: for instance, it rules out
those $\Omega$ that contain analytic varieties of positive dimension in
$\bdy\Omega$.
\end{remark}

Before proving Theorem~\ref{th:kob_lower_bnd_general} we state the
following elementary lemma.

\begin{lemma} \label{l:concave}
Let $\omega: [0,\infty)\to [0,\infty)$ be a concave, monotone increasing
function such that $\omega(0)=0$. Then, for all $\lambda, x\geq 0$, 
$\omega(\lambda x)\leq
(\lambda +1)\omega(x)$.
\end{lemma}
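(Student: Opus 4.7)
The plan is to split into two cases based on whether $\lambda \leq 1$ or $\lambda > 1$, and in each case to use exactly one of the two hypotheses on $\omega$ (monotonicity, concavity with $\omega(0)=0$).

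First, when $0 \leq \lambda \leq 1$, monotonicity alone does the job: $\omega(\lambda x) \leq \omega(x)$, and since $\lambda + 1 \geq 1$, this is bounded by $(\lambda + 1)\omega(x)$. The case $x = 0$ is of course vacuous since $\omega(0) = 0$.

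For $\lambda > 1$ (and $x > 0$), I would exploit concavity by writing $x$ as a convex combination of $\lambda x$ and $0$, namely
\[
  x \,=\, \tfrac{1}{\lambda}(\lambda x) + \bigl(1 - \tfrac{1}{\lambda}\bigr) \cdot 0.
\]
Applying concavity and using $\omega(0)=0$ yields $\omega(x) \geq \tfrac{1}{\lambda}\omega(\lambda x)$, i.e.\ $\omega(\lambda x) \leq \lambda\,\omega(x) \leq (\lambda + 1)\omega(x)$. In fact this argument shows the slightly sharper bound $\omega(\lambda x) \leq \lambda\omega(x)$ for $\lambda \geq 1$, but the weaker form stated in the lemma is what is needed downstream and has the virtue of being uniform across both cases.

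There is no real obstacle here; the only point worth double-checking is that the concavity inequality is applied in the correct direction, which is why it is cleaner to write $x$ as a combination of $\lambda x$ and $0$ rather than to try to expand $\omega(\lambda x)$ directly.
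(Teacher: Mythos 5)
Your proof is correct. The paper states Lemma~\ref{l:concave} without proof, treating it as elementary, so there is no argument to compare against; but the case split you give (monotonicity for $\lambda \leq 1$, concavity via the convex combination $x = \tfrac{1}{\lambda}(\lambda x) + (1-\tfrac{1}{\lambda})\cdot 0$ together with $\omega(0)=0$ for $\lambda > 1$) is exactly the standard argument one would supply, and it establishes the sharper bound $\omega(\lambda x) \leq \max(1,\lambda)\,\omega(x)$ from which the stated inequality follows.
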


\begin{proof}[The proof of Theorem~\ref{th:kob_lower_bnd_general}]
Define $\phi: \bdy\Omega \to (-\infty, 0]$ by $\phi(z) := -2\|z\|^2$. 
We note that all the assertions below hold true trivially when
$\Omega$ is an Euclidean ball with centre $0\in \C^n$. As this
function is Lipschitz, there exists a function
$u_{\phi}: \overline{\Omega}\to \R$ with the properties stated in the
hypothesis of Theorem~\ref{th:kob_lower_bnd_general}. Let us define
$$
  {\Phi}(z)\,:=\,u_{\phi}(z)+\|z\|^2 \quad \forall z\in \overline\Omega.
$$

For $\nu\in \N$, write $\Omega_{\nu} := \{z\in \Omega : \Tdist(z) > 1/2^{\nu}\}$.
Let $\nu_0\in \Z_+$ and be so large that $\Omega_{\nu}$ is connected for every
$\nu\geq \nu_0$. It follows from \cite[Satz~4.2]{richberg:1968} by Richberg that
there exists a plurisubharmonic function $\Psi$ on $\Omega$ of class
$\smoo^\infty$ such that for all $\nu \geq \nu_0$
\begin{equation} \label{eqn:richberg}
  0\leq \Psi(z)-{\Phi}(z)\leq \omega(2^{-\nu}) \quad
  \forall z\in \Omega\setminus \Omega_{\nu}.
\end{equation}
Clearly, $\Psi$ extends continuously to $\overline\Omega$ (we shall refer to
this extension as $\Psi$ as well) and
\begin{equation} \label{eqn:Psi_bdy}
  \Psi(z) = -\|z\|^2 \quad \forall z\in \bdy\Omega.
\end{equation}
Now, let us write $U(z) := \Psi(z)+\|z\|^2$ for each $z\in \overline\Omega$.
Since $\Psi$ is plurisubharmonic,
\begin{equation} \label{eqn:u_Levi_pos}
  \langle v, (\cHess{U})(z) v \rangle \geq \|v\|^2 \quad \forall z\in \Omega
  \text{ and } v\in \Cn.    
\end{equation}

Fix a $z$ such that $\Tdist(z)\leq 1/2^{\nu_0}$.  As $\bdy\Omega$ is compact,
there exists
a point $\xi_z\in \bdy\Omega$ such that
$\Tdist(z) = \|z - \xi_z\|.$
There exists an integer $\nu_z\geq \nu_0$ such that
$$
  1/2^{(\nu_z+1)} < \Tdist(z)\leq 1/2^{\nu_z}.
$$
It follows from \eqref{eqn:richberg} that
\begin{align}
  |U(z)|\,&\leq\,|\Psi(z) - {\Phi}(z)| + |{\Phi}(z) + \|z\|^2| \notag \\
  &\leq\,\omega(2^{-\nu_z}) +
  |({\Phi}(z) + \|z\|^2)- ({\Phi}(\xi_z) + \|\xi_z\|^2)|. 
  \label{eqn:midway}
\end{align}
Now, owing to our hypothesis on $u_{\phi}$, there exists a constant
$C_1 >0 $ such that
$$
  |({\Phi}(z) + \|z\|^2)- ({\Phi}(\xi_z) + \|\xi_z\|^2)|
  \leq C_{\phi}\,\omega(\Tdist(z)) + C_1\Tdist(z).
$$
Here, we have used the condition \eqref{eqn:mod_cont} and
the fact that
$\|z-\xi_z\| = \Tdist(z)$. Combining the last estimate with
\eqref{eqn:midway}, we get, in view of Lemma~\ref{l:concave}:
\begin{align*}
  |U(z)|\,&\leq\,\left(\frac{2^{-\nu_z}}{\Tdist(z)}+1\right)\omega(\Tdist(z))
  + C_{\phi}\,\omega(\Tdist(z)) + C_1\Tdist(z) \\
  &\leq (3 + C_{\phi})\omega(\Tdist(z)) + C_1\Tdist(z).
\end{align*}
From the latter estimate, the fact that $\omega$ is concave, and that
$z$\,---\,apart from the constraint $\Tdist(z) \leq 1/2^{-\nu_0}\leq
1/2$\,---\,was chosen arbitrarily, we have
$$
 |U(z)| \leq C\omega(\Tdist(z)) \quad
 \forall z\in \Omega \text{ such that $\Tdist(z) \leq 1/2^{\nu_0}$}
$$
for some constant $C>0$. Since the set
$\{z\in \Omega:  \Tdist(z)\geq 1/2^{\nu_0}\}$ is compact, raising the value of
$C>0$ if needed, we get
\begin{equation} \label{eqn:u_bound}
  |U(z)| \leq C\omega(\Tdist(z)) \quad \forall z\in \Omega.  
\end{equation}

By \eqref{eqn:Psi_bdy}, we get $U|_{\bdy\Omega} = 0$. Thus, by the Maximum
Principle, $U$ is a smooth negative plurisubharmonic function. Thus, from
\eqref{eqn:u_Levi_pos}, \eqref{eqn:u_bound}, and
Result~\ref{r:Kob_low_bd_Sibony}, we conclude that
$$
  k_{\Omega}(z;v) \geq \Big( \frac{1}{C\alpha} \Big)^{1/2}\!\frac{\|v\|}
  {\omega(\Tdist(z))^{1/2}} \quad \forall z\in \Omega \text{ and } v \in \Cn,
$$
which is the desired lower bound. 
\end{proof}

A substantial part of the proof of Theorem~\ref{th:kob_lower_bnd} is the same as
that of the previous theorem. However, Theorem~\ref{th:kob_lower_bnd} is \emph{not}
a special case of Theorem~\ref{th:kob_lower_bnd_general}; the assumption on
$\bdy\Omega$ gives us better boundary behaviour of the solutions of the
same Dirichlet problem considered in the proof above. With these words, we give:

\begin{proof}[The proof of Theorem~\ref{th:kob_lower_bnd}]
Given our assumptions on $\bdy\Omega$, Result~\ref{r:geometric} implies that
$\Omega$ satisfies the condition $(*)$ in Result~\ref{r:Holder_reg}. As in
the proof of Theorem~\ref{th:kob_lower_bnd_general}, define
$\phi: \bdy\Omega \to (-\infty, 0]$ by $\phi(z) := -2\|z\|^2$. As
$\phi \in \smoo^{1,\,1}(\bdy\Omega)$, taking the values $s=1$ and
$\alpha = 1$ in the conclusion of Result~\ref{r:Holder_reg}, we see that
the Dirichlet problem stated in Result~\ref{r:Holder_reg}, with $\phi$ as
above, has a unique solution of class $\smoo^{2/m}(\overline\Omega)$. Let us
denote this solution by $u_{\phi}$. At this stage, exactly the same argument
as in the proof of Theorem~\ref{th:kob_lower_bnd_general} with
$$
  \omega(r) := r^{2/m}, \quad r\in [0, \infty),
$$
gives us a function $U$ defined on $\overline\Omega$ such that $U|_{\Omega}$
is a smooth negative plurisubharmonic function that satisfies the conditions
\begin{align}
  |U(z)| &\leq C\Tdist(z)^{2/m}, \label{eqn:size} \\
  \langle v, (\cHess{U})(z) v \rangle &\geq \|v\|^2 \label{eqn:Hessn} 
\end{align}
(for some constant $C>0$) for every $z\in \Omega$ and $v\in \Cn$. From these
inequalities and Result~\ref{r:Kob_low_bd_Sibony}, we conclude that
$$
  k_{\Omega}(z;v) \geq \Big( \frac{1}{C\alpha} \Big)^{1/2}\!\frac{\|v\|}
  {\Tdist(z)^{1/m}} \quad \forall z\in \Omega \text{ and } v \in \Cn,
$$
which is the desired lower bound.
\end{proof}

\begin{remark}\label{rem:MA-eqn_why}
The careful reader may notice that Theorem~\ref{th:kob_lower_bnd} could be
proved without
reference to Result~\ref{r:Holder_reg} or to the complex Monge--Amp{\`e}re
equation. Given Theorem~\ref{r:geometric}, one could instead appeal to
\cite[Theorem~2.1]{ha-khanh:2015} which provides a function that, suitably
modified, could substitute $U$ in the proof above. However, the
\textbf{proof} of \cite[Theorem~2.1]{ha-khanh:2015} involves a difficult
construction of a family of plurisubharmonic peak functions that must satisfy
very restrictive conditions. These are the ``plurisubharmonic peak functions
satisfying certain precise estimates'' alluded to in Section~\ref{ssec:Kobayashi}.
Indeed, a proof of Result~\ref{r:Holder_reg} can be given without the use of
such precise estimates. Furthermore, such (families of) plurisubharmonic peak
functions may not be available to large classes of domains, whereas there are
\textbf{many} approaches to the existence and boundary-regularity of solutions to the
homogeneous complex Monge--Amp{\`e}re equation.
Thus, the complex Monge--Amp{\`e}re equation may be a useful tool for establishing
estimates similar to \eqref{eqn:k-metric_lb}. The approach taken in the last proof,
and Theorem~\ref{th:kob_lower_bnd_general}, highlight the latter point.    
\end{remark}

\section{The proof of Theorem~\ref{th:unbdd_visibile}}
Before we can give the proof of Theorem~\ref{th:unbdd_visibile}, we give a
definition that will be useful in the latter proof.

\begin{definition}[Bharali--Zimmer, \cite{bharali-zimmer:2023}] \label{defn:local_M}
Let $\Omega \subset \Cn$ be a Kobayashi hyperbolic domain. Given a subset
$A \subset \overline{\Omega}$, we define the function $r \mapsto
M_{\Omega,\,A}(r) $, $r>0$, as
$$
  M_{\Omega,\,A}(r) := \sup \bigg\{ \frac{1}{k_{\Omega}(z;v)}: z \in
  A \cap \Omega, \, \delta_{\Omega}(z) \leq r, \, \|v\|=1 \bigg\}.
$$
\end{definition}

The function $M_{\Omega,\,A}$ is involved in one of the two conditions that a
point $p\in \bdy\Omega$, for $\Omega$ as in the above definition, must satisfy to
be what is called a ``local Goldilocks point'' by Bharali--Zimmer in
\cite{bharali-zimmer:2023}; see \cite[Definition~1.3]{bharali-zimmer:2023}.
The connection between local Goldilocks points and the visibility property is
given by the following

\begin{result}[paraphrasing {\cite[Theorem~1.4]{bharali-zimmer:2023}}]
\label{r:loc_Goldilocks_visi}
Let $\Omega\subset \Cn$ be a Kobayashi hyperbolic domain. If the set of points
in $\bdy\Omega$ that are not local Goldilocks points is a totally disconnected
set, then $\bdy\Omega$ is visible.
\end{result}

Another useful definition:
\begin{definition}
Let $\Omega$ be a domain in $\C^n$ and let $p\in \bdy\Omega$. A function
$\psi: \Delta \to (-\infty, 0]$, where $\Delta$ is a $\overline{\Omega}$-open
neighbourhood of $p$, is called a \emph{local plurisubharmonic peak function of
$\Omega$ at $p$} if $\psi\in {\rm psh}(\Delta\cap \Omega)\cap \smoo(\Delta)$
and satisfies
\[
  \psi(p) = 0 \quad\text{and} \quad \psi(z)<0 \quad 
  \forall z\in \Delta\setminus \{p\}.
\]
\end{definition}

We are now in a position to give

\begin{proof}[The proof of Theorem~\ref{th:unbdd_visibile}]
The proof of Theorem~\ref{th:unbdd_visibile} will be carried out in two steps.

\medskip
\noindent{{\textbf{Step 1.}} \emph{For $p\in \bdy\Omega$, constructing a
bounded subdomain $D_p$ such that $\bdy\Omega\cap \bdy{D_p} \ni p$ and is large}}%
\smallskip

\noindent{\textbf{Fix} $p\in \bdy\Omega$. Consider a unitary change of coordinate
$Z = (Z_1,\dots, Z_n)$ centered at $p$ (i.e., $Z(p) = 0$) with respect to which
$T_p(\bdy\Omega) = \{(Z_1,\dots, Z_n) \in \C^n: \im(Z_n) = 0\}$, the outward unit
normal to $\bdy\Omega$ at $p$\,($=0$) is $(0,\dots, 0, -i)$, and such that there
exist a neighbourhood $U^2_p := \rB{2n-1}(0, r_2)\times (-r_2, r_2)$ and a
function $\varphi_p : (\rB{2n-1}(0, r_2),0)\to (\R, 0)$ such
that $Z(\bdy\Omega)\cap U^2_p$ is connected and
$$
 Z(\Omega)\cap U^2_p\subset \{(Z', Z_n)\in \rB{2n-1}(0, r_2)\times\R :
 \im(Z_n) > \varphi_p(Z', \re(Z_n))\}
$$
(here, $r_2$ depends on $p$ but, for simplicity of notation, we will omit suffixes
and understand that this dependence is implied). Shrinking $r_2$ if necessary, we
will assume, additionally, that:
\begin{itemize}
  \item $\varphi_p(Z', x_n) \in (-r_2/3, r_2/3)$ for every 
  $(Z', x_n)\in \rB{2n-1}(0, r_2)$ and
  $$
    \{Z\in \rB{2n-1}(0, r_2)\times\R : \varphi_p(Z', \re(Z_n))
  < \im(Z_n) < \varphi_p(Z', \re(Z_n)) + r_2/3\} \subset Z(\Omega),
  $$
  
  \item $(Z(\bdy\Omega)\cap U^2_p) \cap Z(w(\bdy\Omega)) = \emptyset$ if
  $p\notin w(\bdy\Omega)$, and $U^2_p \Subset U_p$ if $p\in w(\bdy\Omega)$
\end{itemize}
(where $U_p$ is as in the statement of Theorem~\ref{th:unbdd_visibile}).
Fix $r_1\in (0, r_2/2)$. Let $\psi_p: \R^{2n-1}\to [0, \infty)$ be a
smooth, non-negative, radial convex function such that
\begin{align}
  \psi_p&|_{\rB{2n-1}(0, r_1)}\equiv 0, \text{ and} \notag \\
  \psi_p&|_{\R^{2n-1}\setminus \rB{2n-1}(0, r_1)}
  \text{ is strongly convex.} \label{eqn:str_cvx}
\end{align}
Clearly,
\begin{equation} \label{eqn:Levi-pscvx_graph}
  {\sf Graph}\!\left(
  \left.(\varphi_p + \psi_p)\right|
  _{\mathbb{B}^{2n-1}(0, r_2)\setminus \overline{\rB{2n-1}(0, r_1)}}\right)
  \text{ is a strongly Levi pseudoconvex hypersurface.}
\end{equation}}

Also, we can find a $\psi_p$ that satisfies all the above conditions and such that
$0\leq \psi_p < r_2/3$ on $\rB{2n-1}(0, r_2)$, due to which
\[
 \mathcal{S}_p := {\sf Graph}\!\left(
 \left.(\varphi_p + \psi_p)\right|_{\rB{2n-1}(0, r_2)}\right)
 \Subset Z(\Omega)\cup {\sf Graph}\left(\varphi_p|_{\overline{\rB{2n-1}(0, r_1)}}\right).
\]
Owing to this and to \eqref{eqn:Levi-pscvx_graph}, we can construct a bounded domain
$\wt{D}_p$ such that
\begin{enumerate}[leftmargin=25pt, label=$(\alph*)$]
  \item $\wt{D}_p\varsubsetneq Z(\Omega)$,
  \item $\mathcal{S}_p\subset \bdy{\wt{D}_p}$,
  \item $S_p := {\sf Graph}\left(\varphi_p|_{\overline{\rB{2n-1}(0, r_1)}}\right)
  = Z(\bdy\Omega)\cap
  \bdy{\wt{D}_p}$,
  \item $\bdy{\wt{D}_p}$ is strongly Levi pseudoconvex at each $\xi \in
  \bdy{\wt{D}_p}\setminus S_p$ whenever $p\in w(\bdy\Omega)$, and $\wt{D}_p$ is a
  strongly Levi pseudoconvex domain when $p\notin w(\bdy\Omega)$.
\end{enumerate}
Write $D_p := Z^{-1}(\wt{D}_p)$. Finally, we can extend
$S\cap Z^{-1}(S_p)$, whenever the latter is non-empty, to a $\smoo^2$-smooth
closed $1$-submanifold of $\bdy{D_p}$.
\medskip

\noindent{{\textbf{Step 2.}} \emph{Showing that each $p\in \bdy\Omega$ is a
local Goldilocks point.}}
\smallskip

\noindent{It is well-known that $p$ is a local Goldilocks point if
$p \notin w(\bdy \Omega)$. For a $p\in w(\bdy\Omega)$, it follows
from the discussion in the second paragraph of Step~1 and from the
properties $(a)$--$(d)$ that $D_p$ satisfies all the conditions of
Theorem~\ref{th:kob_lower_bnd} with $m=m_p$. Let
$\mathcal{U}_p: \overline{D}_p\to (-\infty, 0]$ denote the function
constructed in the proof of Theorem~\ref{th:kob_lower_bnd} that is
plurisubharmonic on $D_p$ and satisfies the conditions \eqref{eqn:size}, with
$m=m_p$, and \eqref{eqn:Hessn}. Let $W_p$ be a neighbourhood of $p$ having the
following properties (recall that $\bdy\Omega$ is $\smoo^2$-smooth):
\begin{itemize}
  \item $(\overline{W}_p\cap \Omega)\varsubsetneq D_p$ and
  $\overline{W_p\cap D_p}\cap \bdy{D_p}\varsubsetneq 
  \bdy{\Omega}\cap \bdy{D}_p$.
  \item $\Tdist(z) = \delta_{D_p}(z)$ for all $z \in W_p\cap D_p$.
  \item For each $z\in W_p$, there is a unique point 
  $\pi(z)\in \bdy\Omega\cap W_p$ such that $\delta_{D_p}(z)=\|z-\pi(z)\|$.
\end{itemize}
Fix $\xi\in \bdy\Omega\cap W_p$ and $\eps(\xi)\in (0, 1/2)$. Then, owing
to \eqref{eqn:Hessn}, the function
$$
  \psi_{\xi}(z) := \mathcal{U}_p(z) - \eps(\xi)\|z-\xi\|^{2},
  \quad z\in D_p\cup (\bdy\Omega\cap \overline{D}_p),
$$
is a local plurisubharmonic peak function of $\Omega$ at $\xi$. Applying a 
construction by Gaussier \cite[Section~2]{gaussier:1999}, we can find a
neighbourhood $W^{*}_p$ of $p$, $W^{*}_p\Subset W_p$, such that for each
$\xi\in \bdy\Omega\cap W^{*}_p$, there exist a neighbourhood $W_{\xi}$ of $\xi$,
$W_{\xi}\Subset W_p$, such that $\pi(z)\in \bdy\Omega\cap W_{\xi}$ for each
$z\in W_{\xi}\cap D_p$, and an upper-semicontinuous plurisubharmonic function
$\wt{\mathcal{U}}_{\xi}: \Omega\to (-\infty, 0)$ such that
\begin{equation} \label{eqn:agree}
  \wt{\mathcal{U}}_{\xi}(z) = \psi_{\xi}(z) \quad \forall 
  z\in W_{\xi}\cap D_p\,.
\end{equation}
Now, define $\wt{W}_p := \bigcup_{\xi\in\, (\bdy\Omega\cap W^{*}_p)\,}(W_{\xi}\cap W^{*}_p)$.
\smallskip

For $z_0\in \wt{W}_p\cap D_p$, by \eqref{eqn:agree},
Result~\ref{r:Kob_low_bd_Sibony}, and the inequality \eqref{eqn:Hessn} applied to
$\psi_{\pi(z_0)}$, we have
\[
  k_{\Omega}(z_0; v) \geq 
  \Big( \frac{1/2}{\alpha} \Big)^{1/2}\!\frac{\|v\|}
  {\big(\,|\mathcal{U}_{p}(z_0)|
    + \eps(\pi(z_0))\delta_{D_p}(z_0)^2\big)^{1/2}}
    \quad \forall v \in \Cn.
\]
The above inequality and \eqref{eqn:size} (taking $m = m_p$) imply that there exists a constant $C_p > 0$,
independent of $z_0\in \wt{W}_p\cap D_p$, such that
\begin{align*}
  k_{\Omega}(z_0; v) &\geq 
  \Big( \frac{1/2}{C_p\alpha} \Big)^{1/2}\!\frac{\|v\|}
  {\delta_{D_p}(z_0)^{1/m_p}} 
  = \Big( \frac{1/2}{C_p\alpha} \Big)^{1/2}\!\frac{\|v\|}{\Tdist(z_0)^{1/m_p}}  
            \quad \forall v \in \Cn.
\end{align*}
The equality above is because $\Tdist(z_0) = \delta_{D_p}(z_0)$.
Write $A^p := \wt{W}_p\cap \overline{\Omega}$. Since $z_0\in \wt{W}_p\cap D_p$ was
arbitrarily chosen, it follows
from the above estimate that the quantity $M_{\Omega,\,A^p}$ satisfies
\begin{equation} \label{eqn:lgp_condn-1}
  M_{\Omega,\,A^p}(r) \leq c_pr^{1/m_p} \quad\text{for all $r>0$ sufficiently
  small,}
\end{equation}
for some $c_p > 0$.}
\smallskip

By Lemma~\ref{l:loc-int-cone}, $\Omega$ satisfies a local interior cone
condition. Thus, by \cite[Lemma~2.2]{bharali-zimmer:2023} and
\eqref{eqn:lgp_condn-1} it follows that $p$ satisfies the conditions for being a
local Goldilocks point. Since $p\in w(\bdy\Omega)$ was chosen arbitrarily, it
follows that every boundary point is a local Goldilocks point.
Thus, by Result~\ref{r:loc_Goldilocks_visi}, $\bdy\Omega$ is visible.
\end{proof}
\smallskip

\section{The proof of Theorem~\ref{th:unbdd_Picard-type}} \label{sec:proof_Picard-type}
In this section, we give the (short) proof of
Theorem~\ref{th:unbdd_Picard-type}. This is a Picard-type extension theorem, as
discussed in Section~\ref{sec:intro}. Such a theorem relies upon
$\Omega\subset \Cn$ being hyperbolically imbedded, just as in Result~\ref{r:Picard-type},
even though $\Omega$ is not relatively compact. We state the result by
Joseph--Kwack alluded to in Section~\ref{sec:intro}, which clarifies
the latter statement. Here, $Z^\infty$ denotes the one-point compactification
of $Z$.

\begin{result}[Joseph--Kwack, {\cite[Corollary~7]{joseph-kwack:1994}}:
paraphrased for $Y$, $Z$ manifolds] \label{r:Picard-type_hyp-imb}
Let $Z$ be a complex manifold and $Y$ be a complex submanifold of $Z$
such that $Y$ is hyperbolically imbedded in Z. Let $X$ be a complex
manifold, let $k = \dim_{\C}(X)$, and let $\mathcal{A} \varsubsetneq X$ be an
analytic subvariety of dimension $(k-1)$ having at most normal-crossing
singularities. Then, any holomorphic map $f: X \setminus \mathcal{A} \rightarrow Y$
extends as a continuous map $\widetilde{f}:X \rightarrow Z^{\infty}$.
\end{result}

\begin{remark}
A comment on some terminology and definitions used in Joseph--Kwack \cite{joseph-kwack:1994} are in order.
With $Y, Z$ as above, they defined a notion of when a point in $\overline{Y} \subset Z$ is a \emph{hyperbolic point
for Y} (see \cite[p.~363]{joseph-kwack:1994} for the definition). Many of the foundational results in
\cite{joseph-kwack:1994} give certain necessary and sufficient conditions for a point in $\overline{Y}$
to be a hyperbolic point for $Y$. A careful reading of the proofs in \cite{joseph-kwack:1994} indicates that
the proof of \cite[Corollary~7]{joseph-kwack:1994} relies on the latter results, and its conclusion holds true
under the assumption that each point in $\overline{Y}$ is a hyperbolic point for $Y$. It turns out
that $Y$ is hyperbolically imbedded in $Z$ (in the sense of
Definition~\ref{defn:hyp_imb}) if and only if each
point in $\overline{Y}$ is a hyperbolic point for $Y$; see \cite{joseph-kwack:preprint},
\cite[Corollary~2]{thai-duc-minh:2005}. 
\end{remark}

\begin{proof}[Proof of Theorem~\ref{th:unbdd_Picard-type}]
Since $\Omega$ satisfies the hypothesis of Theorem~\ref{th:unbdd_visibile},
$\bdy \Omega$ is visible. So, owing to Proposition~\ref{prpn:visi_hyp-im},
$\Omega$ is hyperbolically imbedded in $\Cn$. Thus, by
Result~\ref{r:Picard-type_hyp-imb}, the proof follows immediately.
\end{proof}

\subsection{An example} We conclude this discussion with a basic example
which shows that, for $X$, $\mathcal{A}$ as in Section~\ref{sec:intro}, a
holomorphic map $f: X \setminus\mathcal{A}\rightarrow \Omega$, where $\Omega$
is a hyperbolically imbedded domain, does not, in general, extend continuously
to $X$ if the singularities of $\mathcal{A}$ are even slightly worse than
normal-crossing singularities.

\begin{example}\label{ex:not_ext}
An example of an unbounded planar domain $\Omega$ that is hyperbolically
imbedded in $\C$ and a holomorphic function
$f: (\D^2 \setminus \mathcal{A}) \rightarrow \Omega$, where $\mathcal{A}$
is a closed analytic set in $\D^2$ of codimension $1$ containing singular points,
but not normal-crossing singularities, such that $f$ does \textbf{not} extend:
\begin{itemize}
  \item either to a holomorphic function on $\D^2$,
  \item or to a continuous map from $\D^2$ to $\C^{\infty}$.
\end{itemize}
\end{example}

\noindent{Let $\Omega := \C \setminus \{0, 1\}$. It is a long-established
fact that $\cproj{1} \setminus \{[0:1], [1:0], [1:1]$\} is hyperbolically
imbedded in $\cproj{1}$. So, it follows by
Lemma~\ref{l:hyp_imb_intermediate} that $\Omega$ is hyperbolically
imbedded in $\C$. Define
$$
  \mathcal{A} := \{(z,w) \in \D^2 : z(w-z)w=0\}.
$$
If we define $f: (\D^2 \setminus \mathcal{A}) \rightarrow \C$ by
$$
  f(z,w) := z/w \quad \forall (z,w)\in (\D^2 \setminus \mathcal{A}),
$$
then it is elementary to see that, by construction, $f$ is holomorphic and
that ${\sf range}(f) \subseteq \Omega$. For each fixed $\lambda \in
\C \setminus \{0, 1\}$, $(\lambda\zt, \zt) \in
\D^2 \setminus \mathcal{A}$ for every $\zt\in \C^*$ with sufficiently
small $|\zt|$. We have
$$
  \lim_{\zt \to 0}f(\lambda\zt, \zt) = \lambda.
$$
Since $\lambda \in \C \setminus \{0, 1\}$ was arbitrary, the above shows that
$(0,0)$ is a point of indeterminacy of $f$. Hence, the extension of $f$
to $\D^2$ in either of the two above-mentioned ways is impossible.
\hfill $\btl$}
\medskip

\section*{Acknowledgements}
A.~\!Banik is supported in part by a scholarship from the National Board for Higher
Mathematics (NBHM) (Ref. No.~0203/16(19)/2018-R\&D-II/10706) and by financial
assistance from the Indian Institute of Science.  Both A.~\!Banik and G.~\!Bharali
are supported by the DST-FIST programme (grant no. DST FIST-2021 [TPN-700661]).

\end{document}